\newtheorem{guia}{}
\newtheorem{rem}{}
\newtheorem{teorema}[guia]{Theorem}
\newtheorem{coro}[guia]{Corollary}
\newtheorem{lema}[guia]{Lemma}
\newtheorem{obs}[rem]{\it Remark}
\newcommand{\be}{\beta}
\newcommand{\De}{\Delta}
\newcommand{\de}{\delta}
\newcommand{\ds}{\displaystyle}
\newcommand{\e}{\varepsilon}
\newcommand{\la}{\lambda}
\newcommand{\N}{\mathbb N}
\newcommand{\Om}{\Omega}
\newcommand{\p}{\partial}
\newcommand{\R}{\mathbb R}
\begin{document}

\title[Monotonicity of solutions in half-spaces]{Monotonicity of solutions for 
some nonlocal elliptic problems in half-spaces}

\author[B. Barrios, L. Del Pezzo, J. Garc\'{\i}a-Meli\'{a}n and A. Quaas]
{B. Barrios, L. del Pezzo, J. Garc\'{\i}a-Meli\'{a}n\\ and A. Quaas}

\date{}

\address{B. Barrios \hfill\break\indent
Departamento de An\'{a}lisis Matem\'{a}tico, Universidad de La Laguna
\hfill \break \indent C/. Astrof\'{\i}sico Francisco S\'{a}nchez s/n, 38200 -- La Laguna, SPAIN}
\email{{\tt bbarrios@ull.es}}

\address{L. Del Pezzo \hfill\break\indent
CONICET  \hfill\break\indent
Departamento de Matem\'{a}tica, FCEyN UBA
\hfill\break\indent Ciudad Universitaria, Pab I (1428)
\hfill\break\indent Buenos Aires,
ARGENTINA. }
\email{{\tt ldpezzo@dm.uba.ar}}

\address{J. Garc\'{\i}a-Meli\'{a}n \hfill\break\indent
Departamento de An\'{a}lisis Matem\'{a}tico, Universidad de La Laguna
\hfill \break \indent C/. Astrof\'{\i}sico Francisco S\'{a}nchez s/n, 38200 -- La Laguna, SPAIN
\hfill\break\indent
{\rm and} \hfill\break
\indent Instituto Universitario de Estudios Avanzados (IUdEA) en F\'{\i}sica
At\'omica,\hfill\break\indent Molecular y Fot\'onica,
Universidad de La Laguna\hfill\break\indent C/. Astrof\'{\i}sico Francisco
S\'{a}nchez s/n, 38200 -- La Laguna, SPAIN.}
\email{{\tt jjgarmel@ull.es}}

\address{A. Quaas\hfill\break\indent
Departamento de Matem\'{a}tica, Universidad T\'ecnica Federico Santa Mar\'{\i}a
\hfill\break\indent  Casilla V-110, Avda. Espa\~na, 1680 --
Valpara\'{\i}so, CHILE.}
\email{{\tt alexander.quaas@usm.cl}}

%%%%%%%%0.Abstract

\begin{abstract}
In this paper we consider classical solutions $u$ of the semilinear 
fractional problem $(-\De)^s u = f(u)$ in $\R^N_+$ with $u=0$ in $\R^N \setminus \R^N_+$, 
where $(-\De)^s$, $0<s<1$, stands for the fractional laplacian, $N\ge 2$, $\R^N_+=\{x=(x',x_N)\in \R^N:\ 
x_N>0\}$ is the half-space and $f\in C^1$ is a given function.  
With no additional restriction on the function $f$, we show that bounded, nonnegative, nontrivial 
classical solutions are indeed positive in $\R^N_+$ and verify 
$$
\frac{\p u}{\p x_N}>0 \quad \hbox{in } \R^N_+.
$$
This is in contrast with previously known results for the local case $s=1$, where 
nonnegative solutions which are not positive do exist and the monotonicity property above 
is not known to hold in general even for positive solutions when $f(0)<0$.
\end{abstract}

\maketitle

\section{Introduction}
\setcounter{section}{1}
\setcounter{equation}{0}

The objective of the present paper is to deal with the semilinear problem
\begin{equation}\label{problema}
\left\{
\begin{array}{ll}
(-\De)^s u = f(u) & \hbox{in }\R^N_+,\\[0.35pc]
\ \ u=0 & \hbox{in }\R^N \setminus \R^N_+,
\end{array}
\right.
\end{equation}
where $N\ge 2$, $\R^N_+=\{x=(x',x_N)\in \R^N:\ x_N>0\}$ is the half-space and $f$ is a $C^1$ function. 
The operator $(-\De)^s$, $0<s<1$, is the well-known \emph{fractional laplacian}, which
is defined on smooth functions as
\begin{equation}\label{operador}
(-\De)^s u(x) =  \int_{\R^N} \frac{u(x)-u(y)}{|x-y|^{N+2s}} dy,
\end{equation}
up to a normalization constant which will be omitted for brevity. The integral in \eqref{operador} 
has to be understood in the principal value sense, that is, as the limit as $\e\to 0$ of the 
same integral taken in the complement of the ball $B_\e(x)$ of center $x$ and radius $\e$. Alternatively, this 
operator can be defined (omitting again the normalization constant) as 
\begin{equation}\label{operador2}
(-\De)^s u(x) =\frac{1}{2} \int_{\R^N} \frac{2u(x) -u(x+y)-u(x-y)}{|y|^{N+2s}} dy,
\end{equation}
where now the integral is absolutely convergent for sufficiently smooth functions.

\smallskip
\smallskip

Problems with nonlocal diffusion related to \eqref{problema} have been intensively 
studied in the last years, after their appearance when modeling different situations. 
For instance, anomalous diffusion and quasi-geostrophic flows, turbulence and
water waves, molecular dynamics and relativistic quantum mechanics of stars
(see \cite{BoG,CaV,Co,TZ} and references); or mathematical
finance (cf. \cite{A,Be,CoT}), elasticity problems \cite{signorini},
thin obstacle problem \cite{Caf79}, phase transition \cite{AB98, CSM05, SV08b},  
crystal dislocation \cite{dfv, toland} and stratified materials \cite{savin_vald}.

\medskip

Our inspiration to study problem \eqref{problema} comes from the local case 
$s=1$, that is 
\begin{equation}\label{problema-local}
\left\{
\begin{array}{ll}
-\De u = f(u) & \hbox{in }\R^N_+,\\[0.35pc]
\ \ u=0 & \hbox{on } \p \R^N_+.
\end{array}
\right.
\end{equation}
In a seminal series of papers (cf. \cite{BCN1, BCN2, BCN3, BCN4}), Berestycki, Caffarelli and 
Nirenberg obtained interesting 
qualitative properties for positive solutions of \eqref{problema-local} and Lipschitz nonlinearities 
$f$. Among other results, they showed that if $f(0)\ge 0$, then 
any positive solution of \eqref{problema-local} verifies
\begin{equation}\label{eq-monotonia-local}
\frac{\p u}{\p x_N}>0 \quad \hbox{in }\R^N_+
\end{equation}
(see \cite{BCN2} or \cite{BCN3}). This property had been shown initially with additional 
assumptions on both the solutions and the nonlinearities by Dancer in \cite{D1} and \cite{D2}.

The case $f(0)< 0$ is, however, more subtle, and only partial results are known for the 
moment. See \cite{BCN3}, \cite{FS} for the case $N=2$, \cite{FSo} for $N=2,3$ and 
\cite{CEGM} for dimensions $N\ge 2$. The main reason for this difference is the existence of 
nonnegative (periodic) solutions which are not strictly positive. 

\medskip

With regard to a similar property as \eqref{eq-monotonia-local} for solutions of the nonlocal 
problem \eqref{problema}, only some partial results have been achieved so far, at the best of 
our knowledge. Let us mention \cite{FW} and \cite{QX} where monotone, positive 
nonlinearities where considered, and \cite{chinos} for the particular case $f(t)=t^p$, $p>1$. 
On the other hand, the very recent preprint \cite{DSV} analyzes the same question in more 
general domains, but with a very restricted class of nonlinearities.

Our intention is to show that actually property \eqref{eq-monotonia-local} continues to be 
true for bounded, nonnegative, nontrivial solutions of \eqref{problema} \emph{with no additional 
assumptions placed on the nonlinearity} $f$ aside its regularity. This is in striking contrast 
with problem \eqref{problema-local}, where, as we have remarked, the case $f(0)<0$ 
remains unsolved in its full generality for the moment. 

\medskip

Throughout this work, we will deal with bounded, classical solutions of \eqref{problema}. 
However, this will not cause a loss in generality, since it is well-known from the regularity 
theory developed in  \cite{S, CS, CS2} and bootstrapping that bounded, viscosity solutions 
of \eqref{problema} are automatically classical. Observe that classical 
solutions verify $u\in C^{2s+\be}(\R^N_+)$ for every $\be\in (0,1)$, and in particular they are 
seen to be in $C^1(\R^N_+)$.

\medskip

Our main result is the following:

\begin{teorema}\label{teorema-1}
Assume $f\in C^1(\R)$ and let $u$ be a bounded, nonnegative, nontrivial classical solution of \eqref{problema}. 
Then $u$ is positive and 
\begin{equation}\label{eq-derivadapos}
\frac{\p u}{\p x_N}>0 \quad \hbox{in } \R^N_+.
\end{equation}
\end{teorema}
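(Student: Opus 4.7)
My approach is the method of moving planes in the direction $-e_N$. For $\lambda>0$ set $\Sigma_\lambda=\{x\in\R^N:0<x_N<\lambda\}$, $T_\lambda=\{x_N=\lambda\}$, $x^\lambda=(x',2\lambda-x_N)$, and introduce the antisymmetric function $w_\lambda(x)=u(x^\lambda)-u(x)$. Since $f\in C^1$ and $u$ is bounded, the mean value theorem produces a bounded potential $c_\lambda$, with $\|c_\lambda\|_\infty\le\|f'\|_{L^\infty([0,\|u\|_\infty])}$, such that $(-\De)^s w_\lambda+c_\lambda(x)w_\lambda=0$ in $\Sigma_\lambda$. Moreover $w_\lambda\ge 0$ in $\{x_N\le 0\}$, since $u$ vanishes there while $u(x^\lambda)\ge 0$. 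The theorem reduces to proving $w_\lambda\ge 0$ in $\Sigma_\lambda$ for \emph{every} $\lambda>0$, which amounts to saying that $u$ is nondecreasing in $x_N$.

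The engine is a maximum principle for antisymmetric solutions of $(-\De)^s v+c(x)v\ge 0$ in narrow strips, with $v\ge 0$ outside the reflected pair of strips. Under antisymmetry the reflected part of the kernel $|x-y|^{-(N+2s)}$ contributes an extra coercive term, so such a principle holds whenever $\lambda\|c\|_\infty$ is small enough, \emph{uniformly in the tangential variable $x'$}. Applied to $w_\lambda$ this yields the starting step: $w_\lambda\ge 0$ in $\Sigma_\lambda$ for every sufficiently small $\lambda>0$. I then set $\lambda^*=\sup\{\lambda>0:w_\mu\ge 0\text{ in }\Sigma_\mu\text{ for every }0<\mu\le\lambda\}$ and argue by contradiction that $\lambda^*=\infty$. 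If $\lambda^*<\infty$, continuity gives $w_{\lambda^*}\ge 0$ in $\Sigma_{\lambda^*}$, and the strong maximum principle for antisymmetric solutions forces either $w_{\lambda^*}>0$ strictly in $\Sigma_{\lambda^*}$ or $w_{\lambda^*}\equiv 0$. The second alternative would make $u$ symmetric about $T_{\lambda^*}$; combined with the boundary decay $u(\cdot,x_N)\to 0$ as $x_N\to 0^+$ (inherited from the exterior condition), this produces a hyperplane of interior zeros on which evaluating $(-\De)^s u$ together with the nonlocal strong maximum principle contradicts the nontriviality of $u$. So $w_{\lambda^*}>0$ in $\Sigma_{\lambda^*}$.

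The step I expect to be the main technical obstacle is pushing $\lambda^*$ strictly beyond itself. Writing $\Sigma_{\lambda^*+\delta}=\Sigma_{\lambda^*-\e}\cup(\Sigma_{\lambda^*+\delta}\setminus\Sigma_{\lambda^*-\e})$, the narrow region principle handles the thin slice near $T_{\lambda^*}$ provided $w_{\lambda^*+\delta}\ge 0$ on $\Sigma_{\lambda^*-\e}$. By continuity this would follow from strict positivity of $w_{\lambda^*}$ on $\Sigma_{\lambda^*-\e}$ \emph{with a uniform positive lower bound as $|x'|\to\infty$}, which is not guaranteed by pointwise strict positivity. I plan to obtain it by a contradiction and translation argument: assume sequences $\delta_n\to 0$ and $x_n\in\Sigma_{\lambda^*+\delta_n}$ with $w_{\lambda^*+\delta_n}(x_n)<0$, translate in $x'$ to bring the $x_n$ into a fixed compact set, and pass to the limit using the uniform $C^{2s+\be}$ bounds from \cite{S,CS,CS2}. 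The limit is a bounded nonnegative classical solution $\tilde u$ of the same problem whose associated $\tilde w_{\lambda^*}\ge 0$ vanishes at an interior point of $\Sigma_{\lambda^*}$, triggering the alternative $\tilde w_{\lambda^*}\equiv 0$ already ruled out.

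Once $\lambda^*=\infty$, $w_\lambda\ge 0$ for every $\lambda>0$, so $u$ is nondecreasing in $x_N$. The strict inequality $\p u/\p x_N>0$ in $\R^N_+$ then follows from the strong maximum principle applied to $v=\p u/\p x_N\ge 0$, which satisfies the linear fractional equation $(-\De)^s v=f'(u)v$: vanishing at an interior point would force $v\equiv 0$, making $u$ independent of $x_N$ and hence identically zero via $u(\cdot,x_N)\to 0$ as $x_N\to 0^+$. Positivity then follows directly from strict monotonicity: if $u(x_0',x_{0,N})=0$ for some $x_{0,N}>0$, then $u(x_0',x_N)<0$ for $0<x_N<x_{0,N}$, contradicting $u\ge 0$.
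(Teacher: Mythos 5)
Your overall moving-plane skeleton matches the paper's, but there is a genuine gap at exactly the point that is the paper's main novelty: the disposal of the symmetric alternative $w_{\lambda^*}\equiv 0$. You rule it out by noting that symmetry about $T_{\lambda^*}$ forces $u$ to vanish on the interior hyperplane $\{x_N=2\lambda^*\}$ and then ``evaluating $(-\De)^s u$ together with the nonlocal strong maximum principle.'' That evaluation gives, at an interior zero $x$, $(-\De)^s u(x)=-\int u(y)|x-y|^{-N-2s}\,dy\le 0$, while the equation gives $(-\De)^s u(x)=f(0)$. This is a contradiction only when $f(0)\ge 0$. The theorem, however, allows arbitrary $f\in C^1$, and the case $f(0)<0$ is precisely the one the paper is written to cover (in the local case $s=1$ such nonnegative solutions vanishing on interior hyperplanes do exist, e.g. $1-\cos x_N$ for $f(t)=t-1$). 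The paper needs a separate nonexistence theorem (Theorem \ref{lema-noexistencia}) for solutions symmetric with respect to $T_\lambda$ in $\Sigma_{2\lambda}$, whose proof is not a maximum-principle argument: symmetry gives regularity up to $\p\R^N_+$, one evaluates the equation at $x=0$ and at $x=2\lambda e_N$, compares the two identities to conclude $u\equiv 0$ outside $\Sigma_{2\lambda}$, and then differentiates the resulting integral identity in $x_N$ to force $u\equiv 0$ and $f(0)=0$. Without an ingredient of this kind your dichotomy argument collapses for $f(0)<0$.

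A second, related gap is in your translation--compactness step for pushing past $\lambda^*$. After translating, the limit configuration need not be covered by your dichotomy: the touching point may drift to the boundary hyperplanes $\{x_N=0\}$ or $T_{\lambda^*}$, where $\tilde w_{\lambda^*}$ vanishes trivially and the interior strong maximum principle gives nothing; and the limit $\tilde u$ may be identically zero, in which case ``$\tilde w_{\lambda^*}\equiv 0$'' is no contradiction at all. The paper handles these as separate cases: the boundary cases via quantitative Green's function estimates in the half-space (choosing points $x_j$ where $|v_{\lambda_j}|$ is at least half its supremum and using the representation of Lemma \ref{lema-varianteFW} together with Lemma \ref{lema-FW}(c)), and the degenerate case $\tilde u\equiv 0$ (which forces $f(0)=0$) by a principal-eigenvalue argument when $f'(0)>0$ and again by Green's function estimates when $f'(0)\le 0$. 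Your proposal has no quantitative handle playing this role, so the contradiction does not materialize in these regimes. (Your final step also asserts the equation $(-\De)^s(\p u/\p x_N)=f'(u)\,\p u/\p x_N$ directly, whereas $u$ need not be $C^1$ up to $\p\R^N_+$; this is minor and fixable by cutoffs and incremental quotients, as the paper does.)
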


\bigskip

As a consequence of Theorem \ref{teorema-1}, we can also obtain some Liouville theorems for 
problem \eqref{problema} with some special nonlinearities.

\begin{teorema}\label{teorema-2}
Assume $f\in C^1(\R)$ is such that $f'(t)>0$ for $t>0$, and one of the following holds:

\begin{itemize}

\item[(a)] $f(0)\ne 0$;

\smallskip

\item[(b)] $f(0)=0$ and $f'(0)>0$.

\end{itemize}
Then problem \eqref{problema} does not admit bounded, nonnegative, nontrivial  
solutions.
\end{teorema}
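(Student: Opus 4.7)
\emph{Proof plan.} Argue by contradiction: assume $u$ is a bounded, nonnegative, nontrivial classical solution of \eqref{problema}. Theorem \ref{teorema-1} yields $u>0$ and $\p u/\p x_N>0$ throughout $\R^N_+$. The strategy is to exploit this monotonicity to reduce the problem, via a limit at $x_N=+\infty$, to a bounded positive entire solution in one dimension lower, and then derive a contradiction from the sign and monotonicity hypotheses on $f$.

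Consider the translates $u_n(x):=u(x+ne_N)$. By monotonicity they increase to a pointwise limit $v$, and interior fractional regularity (cf.~\cite{S,CS,CS2}) together with Arzel\`a--Ascoli upgrades the convergence to $C^1_{\rm loc}(\R^N)$; thus $v$ is a bounded classical solution of $(-\De)^s v=f(v)$ in all of $\R^N$. Since $v(x+e_N)=v(x)$ by construction and the monotonicity $\p v/\p x_N\ge 0$ is inherited, $v$ must be independent of $x_N$; write $v(x)=V(x')$. Integrating the fractional kernel in $y_N$ shows that $V:\R^{N-1}\to(0,\infty)$ is a bounded, strictly positive solution of $(-\De_{N-1})^sV=c_Nf(V)$ in $\R^{N-1}$ for a positive constant $c_N$, the positivity coming from $V(x')\ge u(x',1)>0$.

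Set $M:=\sup V$ and $m:=\inf V$, and pick sequences $z_n^\pm\in\R^{N-1}$ with $V(z_n^\pm)\to M,m$. Applying the same compactness procedure to the translates $V(\cdot+z_n^\pm)$ produces bounded solutions $W^\pm$ of the same equation on $\R^{N-1}$ with $W^+\le M$, $W^+(0)=M$ and $W^-\ge m$, $W^-(0)=m$. The pointwise singular-integral expression of $(-\De)^s$ at the origin then gives
\[
c_Nf(M)=(-\De)^sW^+(0)\ge 0,\qquad c_Nf(m)=(-\De)^sW^-(0)\le 0.
\]
Under case (a) with $f(0)>0$, the hypothesis $f'>0$ on $(0,\infty)$ gives $f>0$ on $[0,\infty)$, contradicting $f(m)\le 0$. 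Under case (b), $f>0$ on $(0,\infty)$ forces $m=0$; then $W^-\ge 0$, $W^-(0)=0$, and
\[
-\int_{\R^{N-1}}\frac{W^-(y)}{|y|^{N-1+2s}}\,dy=(-\De)^sW^-(0)=c_Nf(0)=0
\]
imply $W^-\equiv 0$, i.e.~$V\to 0$ locally uniformly along $z_n^-$. The conclusion in case (b) is then obtained by a blow-down/eigenfunction argument: setting $V_n(x'):=V(x'+z_n^-)/V(z_n^-)$, Harnack for the fractional Laplacian together with $f(t)/t\to f'(0)$ as $t\to 0^+$ yields (along a subsequence) a bounded nonnegative $V_\ast$ with $V_\ast(0)=1$ solving $(-\De_{N-1})^sV_\ast=c_Nf'(0)V_\ast$ on $\R^{N-1}$, and testing against the first Dirichlet eigenfunction of $(-\De)^s$ on a large ball $B_R\subset\R^{N-1}$ (whose eigenvalue scales like $R^{-2s}\to 0$) contradicts $f'(0)>0$.

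The main obstacle is case (a) with $f(0)<0$. Here the sup/inf estimates do not immediately exclude that $V$ reduces to the constant $t_0>0$ (the unique positive zero of $f$, when it exists); ruling this possibility out is precisely where the nonlocal structure of the problem enters in an essential way, and is what allows the theorem to cover $f(0)<0$ while the corresponding local problem remains open in its full generality.
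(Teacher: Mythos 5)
Your strategy (pass to the limit as $x_N\to+\infty$, reduce to a bounded positive solution $V$ of $(-\De)^s V=c_Nf(V)$ in $\R^{N-1}$, and evaluate the equation at near-extremal points) is genuinely different from the paper's, and the parts you carry out for $f(0)>0$ are essentially correct. But the proposal has a genuine gap, and it is exactly at the theorem's main point: case (a) with $f(0)<0$ is not proved, as you yourself concede. Moreover it \emph{cannot} be proved from the limit profile alone: if $f$ has a positive zero $t_0$, then $V\equiv t_0$ is a perfectly good bounded positive solution of the limiting equation, so no evaluation at points where $\sup V$ or $\inf V$ is almost attained can exclude it. The missing ingredient is stability: since $f'>0$ on $(0,\infty)$, any such profile is unstable, and the instability must be played against a positive supersolution of the linearized equation coming from $u$ itself, namely $\p u/\p x_N>0$ (Theorem \ref{teorema-1}). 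This is precisely how the paper argues, without ever passing to the limit at infinity: it first shows $u\ge c(\de)>0$ on $\{x_N\ge \de\}$ (translating along a sequence where $u(\cdot,\de)\to0$ and using (a) or (b) to rule out the zero limit), so that $f'(u)\ge\theta>0$ there; then the localized derivative $z=\phi\,\p u/\p x_N$ (handled via incremental quotients, as in Step 3 of the proof of Theorem \ref{teorema-1}) is a nontrivial nonnegative function satisfying $(-\De)^s z\ge\theta z$ on arbitrarily large balls $B\subset\R^N\setminus\Sigma_{2\de}$, which contradicts $\la_1(B_R)=\la_1(B_1)R^{-2s}\to0$ by Theorem 1.1 of \cite{QSX}. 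That argument covers $f(0)<0$ with no extra effort; your route would still need an analogous stability step to close this case.

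There is also a secondary gap in your case (b): after normalizing $V_n=V(\cdot+z_n^-)/V(z_n^-)$, Harnack only yields bounds on compact sets with constants growing with the radius, so the limit $V_\ast$ need not be bounded; worse, the tails of $V_n$ (of size up to $\sup V/V(z_n^-)\to\infty$) are not controlled, so you cannot pass to the limit in the nonlocal equation to obtain $(-\De)^s V_\ast=c_Nf'(0)V_\ast$, nor justify the eigenfunction comparison on a large ball for an unbounded, tail-uncontrolled limit. The fix is to drop the normalization: since $V(\cdot+z_n^-)\to0$ locally uniformly and $f(t)/t\to f'(0)>0$ as $t\to0^+$, one has $(-\De)^s V(\cdot+z_n^-)\ge \tfrac12 c_N f'(0)\,V(\cdot+z_n^-)$ on a fixed ball $B$ chosen with $\la_1(B)<\tfrac12 c_Nf'(0)$, and the comparison of Theorem 1.1 in \cite{QSX} applies directly to these globally nonnegative, unnormalized functions — this is exactly case (c) of Step 2 in the paper's proof of Theorem \ref{teorema-1}, which the paper reuses here.
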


\bigskip

An interesting particular case in Theorem \ref{teorema-2} is obtained when we set 
$f(t)=t-1$. In this case the differences between the local version \eqref{problema-local} and 
its nonlocal counterpart \eqref{problema} become more evident, since in the former 
there exists a unique nonnegative solution given by $u(x)=1-\cos x_N$ (see \cite{CEGM}), 
while for the latter we have:

\begin{coro}\label{coro-lineal}
The problem 
$$
\left\{
\begin{array}{ll}
(-\De)^s u = u-1 & \hbox{in }\R^N_+,\\[0.35pc]
\ \ u=0 & \hbox{in }\R^N \setminus \R^N_+
\end{array}
\right.
$$
does not admit any bounded, nonnegative, nontrivial solution.
\end{coro}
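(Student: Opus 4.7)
The plan is to apply Theorem \ref{teorema-2} directly, specializing to the nonlinearity $f(t)=t-1$. The proof reduces entirely to verifying that this particular $f$ satisfies the hypotheses of that theorem. First, $f$ is a polynomial, so trivially $f\in C^1(\R)$. Second, $f'(t)\equiv 1>0$ on all of $\R$, and in particular on $(0,\infty)$, which is the monotonicity hypothesis on $f'$. Third, one of the two alternatives (a) or (b) must hold: since $f(0)=-1\neq 0$, alternative (a) is satisfied. Theorem \ref{teorema-2} therefore applies and forbids bounded, nonnegative, nontrivial solutions.

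In other words, no genuine obstacle remains to be overcome at the level of the corollary itself: the substance of the argument is bundled into Theorems \ref{teorema-1} and \ref{teorema-2}. The point to highlight in the write-up is the sharp contrast with the local problem $s=1$, where, with this same nonlinearity, the function $u(x)=1-\cos x_N$ is an explicit bounded, nonnegative, nontrivial solution. What makes the nonlocal case different is already encoded in Theorem \ref{teorema-1}: strict monotonicity $\p u/\p x_N>0$ together with boundedness forces the existence of a limit profile as $x_N\to +\infty$, and in the proof of Theorem \ref{teorema-2} this profile, combined with the condition $f(0)<0$, is what produces a contradiction.

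If instead one wished to give a self-contained proof of the corollary without citing Theorem \ref{teorema-2}, the strategy would mirror the one just sketched: invoke Theorem \ref{teorema-1} to obtain positivity and monotonicity of $u$ in the $x_N$ direction, define $v(x'):=\lim_{x_N\to\infty}u(x',x_N)$, verify (via dominated convergence in \eqref{operador2} and the uniform bounds on $u$) that $v$ is a bounded nonnegative entire solution of $(-\De)^s v=v-1$ in $\R^{N-1}$, and then rule this out by testing against a constant or using that $f(0)=-1<0$ together with the strong maximum principle. The main technical delicacy in that self-contained route would be justifying the passage to the limit inside the nonlocal operator, which is why leveraging the already-proved Theorem \ref{teorema-2} is by far the cleanest route.
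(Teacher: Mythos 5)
Your proof is correct and follows exactly the route the paper intends: the paper states that Corollary \ref{coro-lineal} is an immediate consequence of Theorem \ref{teorema-2}, and your verification that $f(t)=t-1$ satisfies $f\in C^1(\R)$, $f'\equiv 1>0$, and alternative (a) since $f(0)=-1\neq 0$ is precisely that argument. The additional remarks on a self-contained alternative are fine but not needed.
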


\medskip

It is interesting to remark that Theorem \ref{teorema-1} is an important tool to prove other 
Liouville theorems for bounded solutions of \eqref{problema}. Indeed, passing to the 
limit as $x_N\to +\infty$, we find that such solutions converge to a \emph{stable} solution 
of $(-\De)^s u=f(u)$ in $\R^{N-1}$. Then one can use the nonexistence theorems already 
available in that situation (cf. for instance \cite{DuS}).

\medskip

We conclude the introduction with a couple of comments on our proofs. We use  
moving planes to show that any nonnegative, bounded, classical solution of \eqref{problema} 
is monotone in the $x_N$ direction. To deal with the moving planes method, we mainly follow 
the approach in \cite{FW}. However, instead of representing the solutions of \eqref{problema} 
with the aid of Green's function in $\R^N_+$ at the onset, we use it for an adequate 
truncation related to $u$ and its reflections. This allows us to avoid any monotonicity or 
sign restriction on $f$. 

It is to be noted that at one point in the argument, when it is assumed that the moving of 
the planes stop somewhere, we need to rule out the existence of solutions which are 
symmetric with respect to a hyperplane contained in $\R^N_+$. In the local case, this is 
only possible under the additional restriction $f(0)\ge 0$, since such solutions do exist 
if $f(0)<0$. However, we show in the present work that symmetric solutions can not 
exist with no additional restriction on $f$ (see 
Theorem \ref{lema-noexistencia} below). In our opinion, this is a result of independent interest.
Its proof is based on the regularity inherited by symmetry in the strip, which allows to evaluate 
the equation on the boundary of the half space. Then the nonlocality of the operator implies,  
loosely speaking, that the interactions between points which are far away in $\R^N_+$ is too strong
and the solution must vanish. This is a remarkable difference with respect to the case $s=1$, 
where this interaction is not present.  

\medskip

The rest of the paper is organized as follows: in Section 2 we give some preliminaries 
and introduce the notation to be used for the moving planes. Section 3 deals with some 
properties of the Green's function in a half-space taken from \cite{FW} and with a 
different representation in terms of this function. In Section 4, we obtain a nonexistence 
result for bounded, nonnegative, nontrivial solutions which are symmetric with respect to 
a hyperplane, and in Section 5 we perform the proof of our main results, Theorems
\ref{teorema-1} and \ref{teorema-2}.

\bigskip

\section{Some preliminaries}\label{s2}
\setcounter{section}{2}
\setcounter{equation}{0}

In this section, we gather some preliminary properties which will be useful in the 
forthcoming sections. We notice that, although we are mostly concerned with solutions in 
the classical sense, other more general concepts of solutions have to be 
considered at some places in the present work, mainly due to the fact that we work with truncations 
of the original functions. 

Thus, throughout this section we will consider inequalities in the viscosity sense (see \cite{CS} 
for a definition). We begin by considering a version of the maximum principle for 
the operator $(-\De)^s$ in unbounded domains, which will be needed below. 
We believe that this result is new.

\begin{lema}\label{lema-pmax}
Assume $D\subset \R^N_+$ is a domain and let $u\in C(\R^N)$ be a bounded function verifying 
$(-\De)^s u\ge 0$ in $D$ in the viscosity sense, with $u\ge 0$ in $\R^N \setminus D$. Then 
$u\ge 0$ in $D$.
\end{lema}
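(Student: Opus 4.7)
We argue by contradiction: suppose $u(x^*)<0$ for some $x^*\in D$ and set $-m:=\inf_{\R^N}u$. Since $u\ge 0$ on $\R^N\setminus D$ we have $m>0$, and the infimum, if attained, can only be attained inside $D$. The key ingredient is a \emph{pointwise identification at an interior global minimum}: assume first that $u(x_0)=-m$ at some $x_0\in D$ and fix $r>0$ with $\overline{B_r(x_0)}\subset D$. The constant function $\vf\equiv -m$ is a $C^2$ test touching $u$ from below at $x_0$ (because $u\ge -m$ globally with equality at $x_0$), so the viscosity supersolution inequality applied to the truncation $\widetilde u\equiv -m$ on $B_r(x_0)$ and $\widetilde u=u$ outside yields
\[
0\;\le\;(-\De)^s\widetilde u(x_0)\;=\;\int_{\R^N\setminus B_r(x_0)}\frac{-m-u(y)}{|x_0-y|^{N+2s}}\,dy\;\le\;0,
\]
the right-hand inequality using $u\ge -m$ everywhere. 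Hence $u\equiv -m$ a.e.\ outside $B_r(x_0)$; by continuity of $u$ and letting $r\to 0^+$, $u\equiv -m$ on $\R^N$, which contradicts $u\ge 0$ on the non-empty set $\R^N\setminus D$.

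It thus suffices to force attainment of the infimum. A minimizing sequence $\{x_k\}\subset D$ cannot accumulate on $\p D\subset\R^N\setminus D$ by continuity of $u$ and the hypothesis $u\ge 0$ there, so either it clusters inside $D$ (done) or $|x_k|\to\infty$. To preclude vertical escape we perturb with the well-known barrier $\Phi(x):=((x_N)_+)^s$, which satisfies $(-\De)^s\Phi\equiv 0$ in $\R^N_+$, $\Phi\ge 0$, and $\Phi=0$ on $\{x_N\le 0\}$. For $\e>0$ small, $v_\e:=u+\e\Phi$ still verifies the lemma's hypotheses with $\inf v_\e<0$, and in addition $v_\e\to\infty$ as $x_N\to\infty$, so minimizing sequences of $v_\e$ remain in a horizontal strip $\{0\le x_N\le M_\e\}$. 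Horizontal escape is handled by translating: $w_k(y):=v_\e(y+(x_k',0))$ is a uniformly bounded sequence of viscosity supersolutions of $(-\De)^s w_k\ge 0$ on $D_k:=D-(x_k',0)\subset\R^N_+$, with $w_k\ge 0$ on $D_k^c\supset\{y_N\le 0\}$ and $w_k(0,x_{k,N})\to\inf v_\e$. Invoking the half-relaxed limits for nonlocal viscosity supersolutions, a subsequence converges to a lower-semicontinuous function $w$ which is still a viscosity supersolution on a limiting open set, satisfies $w\ge 0$ on $\{y_N\le 0\}$, and attains $\inf v_\e<0$ at some interior point. The identification step applied to $w$ forces $w\equiv \inf v_\e<0$ on $\R^N$, contradicting $w\ge 0$ on $\{y_N\le 0\}$. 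Letting $\e\to 0^+$ finally yields $u\ge 0$ in $D$.

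\textbf{Main obstacle.} The principal technical hurdle is the horizontal escape. Since no H\"older-type a priori estimate is at our disposal for bounded viscosity supersolutions of the pure inequality $(-\De)^s u\ge 0$, Arzel\`a--Ascoli cannot be invoked directly for the translates $w_k$. One must therefore rely on the half-relaxed limits machinery tailored to nonlocal viscosity supersolutions, carefully verifying both the stability of the viscosity property under this limit (paying attention to the nonlocal tail contribution coming from outside the truncation ball) and the fact that the limit infimum is attained at an interior point of the limit open set, so that the pointwise identification argument can indeed be applied to the limit $w$. An alternative, more direct route is to use a Poisson-type representation on the bounded subdomains $D\cap B_R$ and let $R\to\infty$, exploiting that the exit Poisson measure of $D^c$ from $D\cap B_R$ tends to $1$ because $D^c\supset\{x_N\le 0\}$ carries full Poisson mass at infinity; this, however, needs the Green--Poisson machinery developed later in Section~3.
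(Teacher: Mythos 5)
Your first two steps are sound and in fact coincide with the paper's starting point: the interior-minimum identification (a constant test function touching from below forces $u\equiv\inf u<0$ off an arbitrarily small ball, hence everywhere, contradicting $u\ge 0$ on $\R^N\setminus D$) is a correct strong-maximum-principle argument in the Caffarelli--Silvestre viscosity framework, and the perturbation $v_\e=u+\e\,(x_N)_+^s$ is exactly the device the paper uses to confine the problem to a slab $\{0<x_N<M\}$. The difference is what happens in the slab: the paper simply applies the maximum principle of Theorem 2.3 in \cite{QX} for supersolutions in domains contained in a slab (observing it persists when the exterior datum is nonnegative rather than zero), whereas you attempt to prove that maximum principle from scratch by translating along a horizontally escaping minimizing sequence and passing to half-relaxed limits.

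That translation step is where your argument, as written, does not close -- and you flag it yourself. Nothing guarantees that the limit point $z_0=\lim (0,x_{k,N})$ is interior to any ``limiting open set'' on which the lower relaxed limit $w$ is a supersolution: since $u$ is only bounded and continuous, there is no uniform modulus of continuity along the escaping sequence, so $\mathrm{dist}(x_k,\p D)\to 0$ cannot be excluded, the translated domains $D_k$ may pinch at $z_k$, and the stability theorem you invoke (which needs either a fixed open set or a uniform ball $B_r(z_0)\subset D_k$) does not apply in the form you state. The gap is bridgeable, but by an observation you do not make: in the stability proof, the approximating minimum points $y_j$ of $w_{k_j}-\varphi$ over $\overline{B_\rho(z_0)}$ satisfy $w_{k_j}(y_j)\to w(z_0)<0$, and since $w_k\ge 0$ outside $D_k$ these $y_j$ automatically lie in $D_{k_j}$, so the viscosity inequality can be used there and Fatou's lemma handles the nonlocal tails (note $v_\e$ is unbounded, but its $x_N^s$ growth keeps the kernel integrals finite). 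Without this (or without simply quoting the slab maximum principle of \cite{QX}, as the paper does), the compactness step is a genuine hole; your alternative Green--Poisson route is also not available at this point of the paper, as that machinery is only developed in Section 3.
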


\begin{proof}
First of all observe that the function $\varphi(x)=  (x_N)_+^s$ is $s-$harmonic in $\R^N_+$, 
where $(x_N)_+$ is the function which coincides with $x_N$ in $(0,+\infty)$ and vanishes in 
$(-\infty,0]$. Indeed, when $x\in \R^N_+$:
\begin{align*}
(-\Delta)^{s}\varphi (x) & =\int_{\mathbb{R}}{\frac{(x_N)^{s}-(y_N)_+^{s}}{|x_N-y_N|^{1+2s}} 
\hspace{-.5mm} \left(\int_{\mathbb{R}^{N-1}} \hspace{-1mm} 
{\frac{|x_N-y_N|^{1+2s}}{\left((x_N-y_N)^2+|x'-y'|^2\right)^{\frac{N+2s}{2}}}\, dy'} \hspace{-1mm} 
\right) \hspace{-1mm} dy_N}\\
&=\int_{\mathbb{R}}{\frac{(x_N)^{s}-(y_N)_+^{s}}{|x_N-y_N|^{1+2s}} \hspace{-.5mm} \left(\int_{\mathbb{R}^{N-1}} 
\frac{dz}{(1+z^2)^{\frac{N+2s}{2}}}\right)dy_N}\\
&=C \int_{\mathbb{R}} \frac{(x_N)^{s}-(y_N)_+^{s}}{|x_N-y_N|^{1+2s}} dy_N=0
\end{align*}
(see for instance the introduction in \cite{CJS} or Proposition 3.1 in \cite{ROS}). 

Next take $\e>0$ and consider the function
$$
v_\e(x)= u(x) + \e (x_N)_+^s, \ x\in \R^N_+.
$$
Since $u$ is bounded, there exists $M>0$ such that $v_\e \ge 0$ if $x_N\ge M$. Define the set 
$D_M=D \cap \{x\in \R^N:\ 0<x_N<M\}$. Then, in the viscosity sense,
\begin{equation}\label{later}
\left\{
\begin{array}{lll}
(-\De)^s v_\e \geq 0 & \hbox{in } D_M,\\[0.35pc]
\ \ v_\e  \ge 0 & \hbox{in }\R^N \setminus D_M.
\end{array}
\right.
\end{equation}
We are in a position to apply Theorem 2.3 in \cite{QX} to conclude that $v_\e \ge 0$ in $\R^N$. Letting 
$\e\to 0$, we obtain that $u\ge 0$ in $\R^N$. It is worth remarking that, although Theorem 2.3 in 
\cite{QX} is stated for functions which vanish in $\R^N\setminus D_M$, a careful inspection 
shows that it is still valid when the involved functions are nonnegative there.
\end{proof}

\bigskip

Before giving our next result, let us introduce some notation related to the method 
of moving planes. For $\la>0$ we denote, as customary:
$$
\begin{array}{l}
\Sigma_\la :=\{ x\in \R^N_+:\ 0<x_N<\la\}\\[.5pc]
T_\la :=\{x\in \R^N:\ x_N=\la\}\\[.5pc]
x^\la:=(x',2\la-x_N) \ \hbox{(the reflection of }x \hbox{ with respect to } T_\la).
\end{array}
$$
If $f$ is a given nonlinearity and $u$ stands for a a bounded, classical nonnegative 
solution of our problem \eqref{problema} we also set
$$
\begin{array}{ll}
u_\la(x)= \left\{
\begin{array}{ll}
u(x), & x\in \Sigma_\la \cup (\R^N \setminus \R^N_+)\\
u(x^\la), & x\not\in \Sigma_\la \cup (\R^N \setminus \R^N_+)
\end{array}
\right. \\[0.75pc]
w_\la(x)= u_\la(x)-u(x), \quad x\in \R^N.
\end{array}
$$
Since our ultimate objective is to show that $w_\la$ is always nonnegative in $\Sigma_\la$, the 
following will also be relevant:
$$
\begin{array}{l}
D_\la=\{x\in \Sigma_\la: \ w_\la(x)<0\}\\[0.5pc]
W_\la=\{x\in D_\la: \ f(u(x)) > f(u^\la(x))\}\\[0.5pc]
v_\la = w_\la \chi_{D_\la},
\end{array}
$$
where $\chi$ will stand throughout the paper for the characteristic function of a set.
It is plain that the function $v_\la$ will only be meaningful when $w_\la$ is negative somewhere 
in $\Sigma_\la$. We next state one of its important properties.

\begin{lema}\label{lema-signo}
Assume $w_\la<0$ somewhere in $\Sigma_\la$, for some $\la>0$. Then, 
\begin{equation}\label{desigualdad}
(-\De)^s v_\la \ge (f(u^\la)-f(u)) \chi_{D_\la} \quad \hbox{in } \R^N_+,
\end{equation}
in the viscosity sense.
\end{lema}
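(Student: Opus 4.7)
The plan is to relate $v_\la$ to the antisymmetric extension $\bar w(x) := u(x^\la) - u(x)$, defined on all of $\R^N$, which satisfies $\bar w(y^\la) = -\bar w(y)$ and agrees with $w_\la$ on $\Sigma_\la \cup (\R^N \setminus \R^N_+)$. The change of variable $z = y^\la$ in the singular integral defining $(-\De)^s$, combined with the Euclidean identity $|x - z^\la| = |x^\la - z|$ and the equation $(-\De)^s u = f(u)$ in $\R^N_+$, yields the basic identity
\begin{equation*}
(-\De)^s \bar w(x) = f(u^\la(x)) - f(u(x)) \quad \text{for every } x \in \Sigma_\la,
\end{equation*}
since for such $x$ both $x$ and $x^\la$ lie in $\R^N_+$.

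For $x \in \R^N_+ \setminus D_\la$ the inequality is immediate: $v_\la(x) = 0$, while $v_\la = w_\la < 0$ on $D_\la$ and $v_\la = 0$ elsewhere, so directly $(-\De)^s v_\la(x) = -\int_{D_\la} w_\la(y)|x-y|^{-N-2s}\,dy > 0$, beating the right-hand side, which vanishes off $D_\la$. The substance lies at $x \in D_\la$: since $v_\la(x) = w_\la(x) = \bar w(x)$, the values at $x$ cancel and
\begin{equation*}
(-\De)^s v_\la(x) - (-\De)^s \bar w(x) = \int_{\R^N} \frac{\bar w(y) - v_\la(y)}{|x-y|^{N+2s}}\,dy =: R(x),
\end{equation*}
so it suffices to show $R(x) \ge 0$.

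To estimate $R(x)$ I would split the $y$-integral over four regions. On $D_\la$ the integrand vanishes; on $\Sigma_\la \setminus D_\la$ it equals $w_\la(y)|x-y|^{-N-2s} \ge 0$; on $\R^N \setminus \R^N_+$ it equals $u(y^\la)|x-y|^{-N-2s} \ge 0$; only the tail $\{y_N \ge \la\}$ has indefinite sign. There the change of variable $z = y^\la$, antisymmetry $\bar w(z^\la) = -\bar w(z)$, and $|x - z^\la| = |x^\la - z|$ convert the tail contribution into $-\int_{\{z_N \le \la\}}\bar w(z)|x^\la - z|^{-N-2s}\,dz$; splitting this new range back into $\Sigma_\la$ and $\R^N \setminus \R^N_+$ and regrouping with the earlier pieces gives
\begin{align*}
R(x) = & \int_{\Sigma_\la \setminus D_\la}\! w_\la(y)\, K_\la(x,y)\,dy + \int_{\R^N \setminus \R^N_+}\! u(y^\la)\, K_\la(x,y)\,dy \\
 & - \int_{D_\la}\frac{w_\la(z)}{|x^\la - z|^{N+2s}}\,dz,
\end{align*}
with $K_\la(x,y) := |x-y|^{-N-2s} - |x^\la - y|^{-N-2s}$. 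The elementary identity $|x^\la - y|^2 - |x - y|^2 = 4(\la - x_N)(\la - y_N)$ shows $K_\la(x,y) \ge 0$ whenever $x \in \Sigma_\la$ and $y_N \le \la$; combined with $w_\la < 0$ on $D_\la$, all three summands above are non-negative, giving $R(x) \ge 0$.

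The main technical hurdle is precisely this bookkeeping: converting the sign-indefinite tail into manifestly non-negative pieces via the antisymmetry of $\bar w$ and the kernel positivity of $K_\la$. The viscosity interpretation is needed only near $\partial D_\la$, where $v_\la$ may fail to be $C^2$ but vanishes continuously and is globally $\le 0$; then any $C^2$ test function $\phi$ touching $v_\la$ from below at such a point $x_0$ satisfies $\phi \le 0$ with $\phi(x_0) = 0$, and the required $(-\De)^s \tilde \phi(x_0) \ge 0$ (with $\tilde \phi$ obtained by replacing $v_\la$ by $\phi$ in a small ball around $x_0$) reduces to the elementary observation that $-\int \tilde \phi(y)|x_0 - y|^{-N-2s}\,dy \ge 0$ when $\tilde \phi \le 0$.
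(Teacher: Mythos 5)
Your proof is correct and follows essentially the same route as the paper's: at interior points of $D_\la$ you compare $v_\la$ with the antisymmetric difference $u(\cdot^\la)-u$ and reduce matters to the sign of $\int \bigl(\bar w(y)-v_\la(y)\bigr)|x-y|^{-N-2s}\,dy$, which is exactly the paper's computation of $(-\De)^s(w_\la-v_\la)$ via the reflection $y\mapsto y^\la$ and the kernel comparison $|x-y|\le |x^\la-y|$ for $x\in\Sigma_\la$, $y_N\le\la$. The treatment of points outside $\overline{D_\la}$ and the viscosity test-function argument on $\p D_\la$ also coincide with the paper's, so only the bookkeeping (keeping the reflected-$D_\la$ term explicitly as a positive contribution instead of discarding it by an inequality) differs.
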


\begin{proof}
Let us prove first that, when $x\in D_\la$, \eqref{desigualdad} holds in the classical 
sense (cf. the proof of Theorem 1.1 in \cite{FeWa}). Denote
$$
z_\la = w_\la-v_\la.
$$
It is clear that in $D_\la$  \eqref{desigualdad} is equivalent to $(-\De)^s z_\la\le 0$. To prove this last 
inequality, denote by $E_\la$ the reflection through the hyperplane $T_\la$ of $D_\la$. Using that 
$z_\la \equiv 0$ in $D_\la$ and $z_\la \ge 0$ in $\Sigma_\la\setminus D_\la$, we have for every 
$x\in D_\la$:
\begin{align*}
(-\De)^s z_\la (x) &=-\left(  \int_{\Sigma_\la \setminus D_\la} + \int_{E_\la}+ \int_{\Sigma_\la^c \setminus E_\la} \right) 
\frac{z_\la(y)}{|x-y|^{N+2s}} dy\nonumber\\
& \leq  -\left(  \int_{\Sigma_\la \setminus D_\la} +  \int_{\Sigma_\la^c \setminus E_\la} \right) 
\frac{z_\la(y)}{|x-y|^{N+2s}} dy \nonumber\\
& = -  \int_{\Sigma_\la \setminus D_\la} z_\la(y) \left( \frac{1}{|x-y|^{N+2s}} - \frac{1}{|x-y^\la|^{N+2s}} \right)dy
\le 0.
\end{align*}
Here we have used that $|x-y|\le |x-y^\la|$ when $x\in D_\la$, $y\in \Sigma_\la$, which can be
easily checked. Thus \eqref{desigualdad} is proved in $D_\la$.

On the other hand, when $x\in \R^N_+\setminus \overline{D_\la}$, it is immediate that
$$
(-\De)^s v_\la (x)=- \int_{D_\la} \frac{v_\la (y)}{|x-y|^{N+2s}} dy \ge 0,
$$
since $v_\la=0$ in $\R^N_+\setminus D_\la$ and $v_\la<0$ in $D_\la$. Therefore \eqref{desigualdad} 
also holds in the classical sense in $\R^N_+\setminus \overline{D_\la}$. 

However, the function $v_\la$ needs not be smooth on $\p D_\la$, so that it is not to be 
expected that its fractional laplacian is even well-defined there. But the inequality can be 
checked in the viscosity sense. To prove this, take $x_0\in \p D_\la$ and let $\varphi\in C^\infty(\R^N)$ 
be such that $\varphi <v_\la$ in a reduced neighborhood $\mathcal{U}\setminus \{x_0\}$ of 
$x_0$, with $\varphi(x_0)=v_\la(x_0)=0$. Then $(-\De)^s v_\la (x_0)\ge 0$ means 
$(-\De)^s \psi (x_0) \ge 0$, where 
$$
\psi(x)= \left\{
\begin{array}{ll}
\varphi(x) & x\in \mathcal{U}\\
v_\la(x) & x\in \R^N\setminus \mathcal{U}
\end{array}
\right.
$$
(cf. \cite{CS}). The inequality $(-\De)^s \psi(x_0)\ge 0$ is easily checked since,
taking into account that $v_\la \le 0$ in $\R^N$, so that $\varphi\le 0$ in $\mathcal{U}$, 
we deduce
$$
(-\De)^s \psi (x_0)=- \int_{\mathcal{U}} \frac{\varphi(y)}{|x-y|^{N+2s}} dy
-  \int_{\R^N\setminus \mathcal{U}} \frac{v_\la (y)}{|x-y|^{N+2s}} dy\ge 0,
$$
as was to be shown.
\end{proof}

\medskip

\section{A representation in the half-space}
\setcounter{section}{3}
\setcounter{equation}{0}

In this section, we will show that the function $v_\la$ verifies an inequality 
which involves the Green's function in the half-space. As we have already remarked 
in the Introduction, the representation is rather general and does not impose any 
additional properties on the nonlinear term $f$. Recall that we are always assuming 
$0<s<1$.

We introduce the Green's function for $\R^N_+$ (see \cite{FW}). If $x,y\in \R^N_+$, we let
\begin{equation}\label{eq-green}
G_\infty^+(x,y) = \frac{k_N^s}{2} |x-y|^{2s-N} \int_0 ^{\psi_\infty^+(x,y)} \frac{t^{s-1}}{(t+1)^\frac{N}{2}} dt,
\end{equation}
where
$$
\psi_\infty^+ (x,y) = \frac{4 x_N y_N}{|x-y|^2}.
$$
In \eqref{eq-green}, $k_N$ is a positive constant whose actual value is immaterial for us. 
It is shown in Theorem 3.1 of \cite{FW} that if $u\in L^\infty(\R^N)$ vanishes outside $\R^N_+$ and  
$(-\De)^s u\in L^\infty(\R^N_+)$ is nonnegative, then 
$$
u(x) = \int_{\R^N_+} G_\infty^+ (x,y) (-\De)^s u(y) dy, \quad x\in \R^N_+.
$$
To avoid the sign restriction just mentioned, we follow a different approach. The information 
we obtain is slightly weaker, but it suffices for our arguments in the proofs of Section 5. Here 
is the main result of this section:

\begin{lema}\label{lema-varianteFW}
Assume $f$ is locally bounded and let $u$ be a nonnegative, bounded solution of \eqref{problema}. 
Suppose $w_\la<0$ somewhere in $\Sigma_\la$, for some $\la>0$. Then, for 
every $x\in W_\la=\{x\in D_\la: \ f(u(x)) > f(u^\la(x))\}$, 
$$
v_\la(x) \ge \int_{W_\la} G_\infty^+ (x,y) (f(u^\la(y))-f(u(y))) dy,
$$
where the integral is absolutely convergent.
\end{lema}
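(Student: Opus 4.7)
The plan is to apply the weak maximum principle from Lemma \ref{lema-pmax} to the auxiliary function $g:=v_\la+J$, where
$$J(x):=\int_{W_\la} G_\infty^+(x,y)\bigl(f(u(y))-f(u^\la(y))\bigr)\,dy \quad (x\in\R^N_+), \qquad J(x):=0 \text{ for } x\notin\R^N_+.$$
Since $f(u)>f(u^\la)$ on $W_\la$, the integrand is nonnegative and $J\ge 0$, and the absolute convergence claim in the statement is equivalent to the finiteness of $J(x)$ for each $x$.

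To show that $J$ is finite, bounded, and continuous on $\R^N$, note that $u$ is bounded and $f$ is locally bounded, so $|f(u)-f(u^\la)|\le M$ on $W_\la$, and $W_\la\subset\Sigma_\la$ keeps $y_N\in(0,\la)$. Splitting into $|x-y|\le 1$ and $|x-y|\ge 1$ and using the explicit formula \eqref{eq-green} together with the bounds $G_\infty^+(x,y)\le C|x-y|^{2s-N}$ near $x$ (from finiteness of the inner $t$-integral) and $G_\infty^+(x,y)\le C(x_Ny_N)^s|x-y|^{-N}$ far from $x$ (from $\int_0^\psi t^{s-1}(t+1)^{-N/2}\,dt\sim\psi^s/s$ as $\psi\to 0^+$), both pieces are uniformly bounded thanks to the truncation $y_N\le\la$. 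Continuity of $J$ follows by dominated convergence.

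Next I would identify $(-\De)^sJ$. Put $J_R(x):=\int_{W_\la\cap B_R}G_\infty^+(x,y)(f(u)-f(u^\la))\,dy$. The integrand is bounded and supported in the bounded set $W_\la\cap B_R$, so the Green's function property (Theorem 3.1 of \cite{FW}) gives $(-\De)^sJ_R=(f(u)-f(u^\la))\chi_{W_\la\cap B_R}$ in $\R^N_+$ with $J_R=0$ outside $\R^N_+$. Since $J_R\nearrow J$ pointwise to a continuous limit, Dini's theorem yields uniform convergence on compacta, and stability of viscosity solutions then gives $(-\De)^sJ=(f(u)-f(u^\la))\chi_{W_\la}$ in $\R^N_+$ in the viscosity sense, with $J=0$ outside.

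To conclude, $g=v_\la+J$ is continuous and bounded on $\R^N$ and vanishes outside $\R^N_+$. Combining Lemma \ref{lema-signo} with the equation for $J$ gives, in the viscosity sense on $\R^N_+$,
$$(-\De)^sg\ge(f(u^\la)-f(u))\chi_{D_\la}+(f(u)-f(u^\la))\chi_{W_\la}=(f(u^\la)-f(u))\chi_{D_\la\setminus W_\la}\ge 0,$$
since $f(u^\la)\ge f(u)$ on $D_\la\setminus W_\la$ by definition of $W_\la$. Lemma \ref{lema-pmax} then yields $g\ge 0$ in $\R^N_+$, which reads $v_\la(x)\ge-J(x)$ and is exactly the stated inequality. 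The main obstacle I expect is the identification of $(-\De)^sJ$ in the viscosity sense on the unbounded domain: the passage $R\to\infty$ must be set up so that both $J_R$ and the right-hand sides converge in a manner respecting the viscosity notion, which ultimately relies on the uniform estimate from the first step.
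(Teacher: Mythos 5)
Your construction of the potential $J$ and the final algebra are fine, but there is a genuine gap at the decisive step: you assert that from $(-\De)^s v_\la \ge (f(u^\la)-f(u))\chi_{D_\la}$ (Lemma \ref{lema-signo}, viscosity sense) and $(-\De)^s J=(f(u)-f(u^\la))\chi_{W_\la}$ (viscosity sense) one may add the two relations and conclude $(-\De)^s(v_\la+J)\ge (f(u^\la)-f(u))\chi_{D_\la\setminus W_\la}$ in the viscosity sense. Viscosity supersolutions do not add in general: if $\varphi$ touches $v_\la+J$ from below at a point, then $\varphi-J$ touches $v_\la$ from below, but $\varphi-J$ is not an admissible test function, since $J$ is only a potential of an $L^\infty$ density supported on $W_\la$ and hence only $C^{2s-\e}_{loc}$ (its boundary $\p W_\la$ is not regular), so $(-\De)^s J$ cannot be evaluated classically and no $C^{1,1}$-type regularity is available to justify the superposition. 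Repairing this requires extra machinery not in the paper (e.g.\ passing through distributional supersolutions and an equivalence of weak and viscosity notions), or else restructuring the argument. The paper avoids the issue entirely: it never adds supersolutions, but compares $v_\la$ in the half-balls $B_R^+$ with the \emph{solution} $h_R$ of the Dirichlet problem \eqref{prob-bola} (a single supersolution-vs-solution comparison), writes $h_R$ explicitly via \eqref{eq-repres} with the Green function $G_R^+$ and the Poisson kernel $\Gamma_R^+$, and lets $R\to+\infty$, killing the Poisson term by \eqref{eq-limite-gamma} and handling the Green term by the monotonicity $G_R^+\nearrow G_\infty^+$ and the bound \eqref{est-green} together with dominated convergence; the restriction of the integral from $D_\la$ to $W_\la$ is then the same sign observation you use.

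Two secondary points. First, your identification $(-\De)^s J_R=(f(u)-f(u^\la))\chi_{W_\la\cap B_R}$ invokes Theorem 3.1 of \cite{FW} in the wrong direction: as quoted, that theorem represents a given bounded solution through its fractional Laplacian; to conclude that the Green potential of a prescribed bounded, compactly supported, nonnegative density solves the half-space Dirichlet problem you must first produce a bounded solution of that problem (say by exhaustion with the balls $B_R^+$) and then identify it with $J_R$ via Theorem 3.1 -- this can be done (the paper does something analogous in the proof of Lemma \ref{lema-FW}(c)), but it is not automatic as written. Second, the stability step for $J_R\to J$ needs, besides local uniform convergence, control of the nonlocal tails (convergence in the weighted $L^1$ space with weight $(1+|y|)^{-N-2s}$), which your uniform bound does give but which should be said. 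Even with these patched, the summation gap above remains the obstruction; I recommend replacing the $v_\la+J$ device by the paper's comparison with $h_R$ in $B_R^+$.
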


\medskip

In order to prove Lemma \ref{lema-varianteFW}, we borrow some notation and results 
from \cite{FW}. For $R>0$, define $B_R^+:=B_R (Re_N)\subset \R^N_+$, where $e_N$ stands for the 
last vector in the canonical basis, and let 
$$
G_R^+(x,y)= \frac{k_N^s}{2} |x-y|^{2s-N} \int_0 ^{\psi_R^+(x,y)} \frac{t^{s-1}}{(t+1)^\frac{N}{2}} dt,
$$
with 
$$
\psi_R^+(x,y) = \frac{(R^2 - |x-R e_N|^2)(R^2 - |y-R e_N|^2)}{R^2 |x-y|^2},
$$
be the Green's function in the ball $B_R^+$. We also introduce 
$$
\Gamma_R^+ (x,y)= C_{N,s} \left( \frac{R^2 - |x- Re_N|^2}{|y- Re_N|^2 - R^2}\right)^s |x-y|^{-N},
$$
the Poisson kernel for the same ball (cf. \cite{BGR} for some properties of both functions). According to 
Corollary 2.9 in \cite{FW}, if $h_R$ is the unique solution of the Dirichlet problem
$$
\left\{
\begin{array}{ll}
(-\De)^s h = g_1 & \hbox{in } B_R^+,\\[0.35pc]
\ \ h=g_2 & \hbox{in }\R^N \setminus B_R^+,
\end{array}
\right.
$$
where $g_1\in L^\infty(B_R^+)$ and $g_2\in L^\infty(\R^N\setminus B_R^+)$, then we can write:
\begin{equation}\label{eq-repres}
h_R (x) = \int_{B_R^+} G_R^+(x,y) g_1(y) dy + \int_{\R^N \setminus B_R^+} \Gamma_R^+(x,y) g_2(y) dy.
\end{equation}
Regarding this representation, it is to be noted that, when $g_2 \in L^\infty(\R^N)$, as a consequence 
of equation (3.7) in \cite{FW}, then 
\begin{equation}\label{eq-limite-gamma}
\lim_{R\to +\infty} \int_{\R^N \setminus B_R^+} \Gamma_R^+(x,y) g_2(y) dy=0
\end{equation}
for every $x\in \R^N_+$. 
The following properties of Green's function will be used in our proof of 
Lemma \ref{lema-varianteFW} and in the proof of Theorem \ref{teorema-1} in Section 5. 

\begin{lema}\label{lema-FW}
Fix $R_0>0$. Then the functions $G_R^+(x,y)$ are nondecreasing with respect to $R$ in 
$B_{R_0}^+ \times B_{R_0}^+$ if $R>R_0$ and verify 
$$
G_R^+ \to G_\infty^+  \quad \hbox{in }B_{R_0}^+\times B_{R_0}^+ \hbox{ as }R\to +\infty.
$$ 
Moreover, for every $\la>0$, there exists $C=C(N,s,\la)$ such that 
\begin{equation}\label{est-green}
G_\infty^+(x,y) \le C \min\{ |x-y|^{2s-N},|x-y|^{-N}\} \quad \hbox{for } x,y \in \Sigma_\la.
\end{equation}
In addition, the function $G_\infty^+(x,y)$ enjoys the following properties:

\begin{itemize}

\item[(a)] If $\{x_n\}$ is a bounded sequence, then for every $\la>0$
$$
\lim_{R\to +\infty} \int_{\Sigma_\la \cap B_R^c} G_\infty^+ (x_n,y) dy =0,
$$
uniformly in $n$.

\item[(b)] If $\la>0$ and $\{x_n\}$ is a bounded sequence, then for every $R>0$ there exists 
a positive constant $C$ such that
$$
\int_{\Sigma_\la \cap B_R} G_\infty^+(x_n,y) dy\le C \quad \hbox{for every } n\in \N.
$$

\item[(c)] For every $\la>0$ and $x_0\in \p \R^N_+$, we have 
$$
\lim_{x\to x_0} \int_{\Sigma_\la} G_\infty^+ (x,y) dy =0.
$$
\end{itemize} 
\end{lema}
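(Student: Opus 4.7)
The proof decomposes into four items and I would attack them in the order stated.

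For monotonicity in $R$ and pointwise convergence $G_R^+ \to G_\infty^+$, I would first rewrite $R^2 - |x - R e_N|^2 = 2R x_N - |x|^2$ and note that $(x,y) \in B_{R_0}^+ \times B_{R_0}^+$ forces $|x|^2 < 2R_0 x_N$ and $|y|^2 < 2R_0 y_N$. Direct differentiation in $R$ of the resulting formula
$$\psi_R^+(x,y) = \frac{(2Rx_N-|x|^2)(2Ry_N-|y|^2)}{R^2 |x-y|^2}$$
reduces positivity of $\p_R \psi_R^+$ for $R > R_0$ to the inequality $R(x_N|y|^2 + y_N|x|^2) > |x|^2|y|^2$, which is immediate from the above bounds. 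Since the integrand $t^{s-1}(t+1)^{-N/2}$ is positive, this gives monotonicity of $G_R^+$ in $R$; and since $\psi_R^+(x,y) \to 4x_Ny_N/|x-y|^2 = \psi_\infty^+(x,y)$, the pointwise convergence $G_R^+ \to G_\infty^+$ follows directly from the explicit formula.

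For estimate \eqref{est-green}, setting $F(A) = \int_0^A t^{s-1}(t+1)^{-N/2}\,dt$, the bound $F(A) \le F(+\infty) = B(s, N/2 - s) < \infty$ yields $G_\infty^+ \le C|x-y|^{2s-N}$, while $F(A) \le A^s/s$ (from $(t+1)^{-N/2} \le 1$) combined with $\psi_\infty^+ \le 4\la^2/|x-y|^2$ on $\Sigma_\la$ yields $G_\infty^+ \le C\la^{2s}|x-y|^{-N}$. Taking the minimum of these is exactly the stated inequality.

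Properties (a) and (b) are then routine consequences. For (a) I would use the $C|x-y|^{-N}$ bound: for $|y| > R$ large and bounded $\{x_n\}$, $|x_n - y| \ge |y|/2$, and the integral of $|y|^{-N}$ over $\Sigma_\la \cap B_R^c$ is of order $\la/R \to 0$ uniformly in $n$. For (b) I would use $C|x-y|^{2s-N}$, whose singularity is integrable (since $2s - N > -N$) on the bounded set $\Sigma_\la \cap B_R$; boundedness in $n$ then follows from boundedness of $\{x_n\}$.

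The main obstacle is (c), which requires exploiting the vanishing of $G_\infty^+$ at the boundary of $\R^N_+$. The plan is to rely on the universal improvement $G_\infty^+(x, y) \le C(x_N y_N)^s/|x-y|^N$, obtained by applying $F(A) \le A^s/s$ to the explicit form of $\psi_\infty^+$. I would then split $\Sigma_\la = (\Sigma_\la \cap B_r(x_0)) \cup (\Sigma_\la \setminus B_r(x_0))$: on the inner piece, $G_\infty^+ \le C|x-y|^{2s-N}$ together with $B_r(x_0) \subset B_{2r}(x)$ for $x$ close to $x_0$ gives a contribution of order $r^{2s}$; on the outer piece, the improved bound provides a prefactor $x_N^s \to 0$ multiplied by $\int_{\Sigma_\la \setminus B_r(x_0)} y_N^s |x-y|^{-N}\,dy$. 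The hard point will be showing that this last integral remains bounded uniformly for $x$ near $x_0$; the plan is to split the $y_N$-integration according as $|y_N - x_N|$ is smaller or larger than $r$, using the Poisson-type identity $\int_{\R^{N-1}}((y_N-x_N)^2 + |y'-x'|^2)^{-N/2}\,dy' = C/|y_N - x_N|$ in the far region and a cruder decay bound (integration outside a ball of radius $\sim r$ in $y'$) in the near region. An $\e/2$--$\e/2$ argument, choosing $r$ small first and then $x$ close to $x_0$, then closes everything.
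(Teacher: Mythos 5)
Your proposal is correct, and all the computations check out: the reduction of $\partial_R\psi_R^+>0$ to $R\bigl(x_N|y|^2+y_N|x|^2\bigr)>|x|^2|y|^2$ does follow from $|x|^2<2R_0x_N$, $|y|^2<2R_0y_N$ on $B_{R_0}^+\times B_{R_0}^+$; the two bounds on $F(A)=\int_0^A t^{s-1}(1+t)^{-N/2}dt$ (namely $F\le B(s,\tfrac N2-s)$, which is finite since $N\ge 2>2s$, and $F(A)\le A^s/s$) give exactly \eqref{est-green}; and your near/far splitting for (c) closes, since for $|x-x_0|<r/2$ the slice integrals are bounded by $C\min\{|y_N-x_N|^{-1},r^{-1}\}$, yielding an outer contribution of order $x_N^s\,C(r,\lambda)$ against an inner contribution $O(r^{2s})$. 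Your route is, however, genuinely different from the paper's, which is only a sketch: there, the monotonicity and convergence of $G_R^+$ and the estimate \eqref{est-green} are not re-derived but quoted from Lemmas 3.2 and 4.1 of \cite{FW}, and, more importantly, part (c) is proved not by estimating the kernel at all but by observing that $h(x)=\int_{\Sigma_\lambda}G_\infty^+(x,y)\,dy$ is, via Theorem 3.1 of \cite{FW}, the solution of $(-\Delta)^s h=\chi_{\Sigma_\lambda}$ in $\R^N_+$ with $h=0$ outside, and then invoking continuity of $h$ up to $\partial\R^N_+$. The paper's argument is shorter but leans on the representation theorem and on boundary regularity for the fractional Dirichlet problem; yours is entirely self-contained and elementary, works directly from the explicit formula, and in addition is quantitative, giving the explicit boundary decay $\int_{\Sigma_\lambda}G_\infty^+(x,y)\,dy\le C r^{2s}+C(r,\lambda)x_N^s$ through the refined bound $G_\infty^+(x,y)\le C(x_Ny_N)^s|x-y|^{-N}$, which the paper never states. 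Either approach proves the lemma; yours costs a page of computation, the paper's costs two citations.
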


\begin{proof}[Sketch of proof]
The statements about monotonicity and convergence of $G_R^+$ are a consequence 
of Lemma 3.2 in \cite{FW}. The estimates \eqref{est-green} follow because of Lemma 
4.1 there.

Parts (a) and (b) are a direct consequence of \eqref{est-green}, while for the proof of (c), 
we only have to notice that, if $h\in C(\R^N)$ is the unique solution of 
$$
\left\{
\begin{array}{ll}
(-\De)^s h = \chi_{\Sigma_\la}  & \hbox{in }\R^N_+,\\[0.35pc]
\ \ h=0 & \hbox{in }\R^N \setminus \R^N_+,
\end{array}
\right.
$$
then by Theorem 3.1 in \cite{FW} we have 
$$
h(x) =\int_{\Sigma_\la}G_\infty ^+(x,y) dy,
$$
and the proof follows because of the continuity of $h$ up to the boundary of 
$\R^N_+$. 
\end{proof}

\bigskip

We can now proceed to the proof of Lemma \ref{lema-varianteFW}. 

\medskip

\begin{proof}[Proof of Lemma \ref{lema-varianteFW}]
We start by observing that, by Lemma \ref{lema-signo}
\begin{equation}\label{eq4}
(-\De)^s v_\la \ge (f(u^\la)-f(u)) \chi_{D_\la} \quad \hbox{in } \R^N_+.
\end{equation}
Consider the balls $B_R^+$ introduced before and denote by $h_R$ the unique solution 
of the problem
\begin{equation}\label{prob-bola}
\left\{
\begin{array}{ll}
(-\De)^s h = (f(u^\la)-f(u)) \chi_{D_\la} & \hbox{in } B_R^+,\\[0.35pc]
\ \ h=v_\la & \hbox{in }\R^N \setminus  B_R^+.
\end{array}
\right.
\end{equation}
It is clear by \eqref{eq4} and the maximum principle that $v_\la \ge h_R$ in $B_R^+$. Therefore, 
according to \eqref{eq-repres}, we may write
\begin{equation}\label{comp-green}
\begin{array}{rl}
v_\la(x)  \ge h_R(x) \hspace{-2mm} & \ds = \int_{B_R^+\cap D_\la} G_R^+(x,y) (f(u^\la(y))-f(u(y))) dy \\[1pc]
&+\ds \int_{\R^N_+ \setminus B_R^+} \Gamma_R^+(x,y) v_\la(y) dy.
\end{array}
\end{equation}
Our intention is to pass to the limit in \eqref{comp-green}. Notice that, since $v_\la$ is 
bounded, we have by \eqref{eq-limite-gamma} that the last integral converges to zero as 
$R\to +\infty$. 

\smallskip

On the other hand, we obtain from Lemma \ref{lema-FW} that $G_R^+$ is nondecreasing 
as a function of $R$ and, for fixed $x\in \Sigma_{\la}$
$$
G_R^+(x,y) \le G_\infty ^+ (x,y) \le C \min\{|x-y|^{-N+2s},|x-y|^{-N}\} \in L^1(\Sigma_{\la}),
$$
as a function of $y$. Therefore, letting $R\to +\infty$ in \eqref{comp-green} and using dominated 
convergence we arrive at 
$$
\begin{array}{rl}
v_\la(x) \hspace{-2mm} & \ds \ge \int_{D_\la} G_\infty^+(x,y) (f(u^\la(y))-f(u(y))) dy \\[1pc]
& \ge \ds \int_{W_\la} G_\infty^+(x,y) (f(u^\la(y))-f(u(y))) dy,
\end{array}
$$
as was to be proved.
\end{proof}

\medskip

\begin{obs}\label{rem-semiespacio}
{\rm Similar results as the ones given in this section follow easily in other half-spaces by means of a 
simple change of variables. For instance, in $H:=\{x\in \R^N:\ x_N<\la\}$, the Green's 
function is given by
$$
G(x;y)=G_\infty^+ (x',\la-x_N;y',\la-y_N), \quad x,y\in H,
$$
and similar properties as those given in Lemma \ref{lema-varianteFW} are obtained at once.
}
\end{obs}

\bigskip

\section{A nonexistence theorem}
\setcounter{section}{4}
\setcounter{equation}{0}

In this section we will state and prove a nonexistence result for nonnegative solutions of \eqref{problema} 
which are symmetric with respect to a hyperplane. This result is fairly important in the moving 
planes argument, and it is the reason why the case $f(0)<0$ can be included in our 
proofs, in contrast with the local case $s=1$. We believe it is interesting in 
its own right.

It is to be noted that, when the nonlinearity verifies $f(0)\ge 0$, the nonexistence 
of these symmetric solutions is a direct consequence of the strong maximum principle. 
The proof we give, however, covers also this case. Observe that next theorem holds with 
minimal hypotheses on $f$.

\begin{teorema}\label{lema-noexistencia}
Assume $f$ is continuous at zero and let $u\in C^{2s+\be}(\R^N_+)$ ($0<\be<1$) be a bounded, 
nonnegative, classical solution of 
\eqref{problema} which is symmetric with respect to $T_\la$ in $\Sigma_{2\la}$ for some $\la>0$, 
that is
$$
u(x',2\la-x_N)=u(x',x_N), \quad x\in \Sigma_{2\la}.
$$
Then $f(0)=0$ and $u\equiv 0$ in $\R^N$.
\end{teorema}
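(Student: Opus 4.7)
The plan is to evaluate the fractional laplacian at carefully chosen points where $u$ vanishes, and to exploit the two resulting integral identities.

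First, the symmetry $u(x',2\la-x_N)=u(x',x_N)$ on $\Sigma_{2\la}$, combined with the boundary condition $u(x',0)=0$, gives $u(x',2\la)=0$ for every $x'\in\R^{N-1}$; so $T_{2\la}$, though interior to $\R^N_+$, lies in the zero set of $u$. At any point $P=(x_0',2\la)$ the equation holds classically, and since $u\ge 0$ and $u(P)=0$,
$$f(0)=(-\De)^s u(P)=-\int_{\R^N_+}\frac{u(y)}{|P-y|^{N+2s}}\,dy. \quad(\ast)$$
Hence $f(0)\le 0$, with equality if and only if $u\equiv 0$ in $\R^N_+$; in that case $f(0)=0$ and we are done. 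For the rest of the plan I therefore suppose $u\not\equiv 0$ and aim at a contradiction.

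Second, I invoke the ``regularity inherited by symmetry in the strip'' alluded to in the introduction. The reflection $x_N\mapsto 2\la-x_N$ is an isometry fixing $\Sigma_{2\la}$ and swapping $T_0^+$ with the interior plane $T_{2\la}$, so the full $C^{2s+\be}$-regularity of $u$ at $T_{2\la}$ transfers to $T_0^+$. Since $u$ attains a global minimum on $T_{2\la}$, $\na u$ vanishes there and, by the symmetry, also on $T_0^+$; combined with the trivial extension $u\equiv 0$ in $\{x_N<0\}$ this makes $u$ of class $C^1$ across $T_0$ and vanish quadratically in $x_N$ near it. That is enough to make $-\int_{\R^N_+}u(y)|x_0-y|^{-N-2s}dy$ an absolutely convergent integral for every $x_0\in T_0$, and passing to the limit $x\to x_0$ from inside $\R^N_+$ in the equation yields
$$f(0)=-\int_{\R^N_+}\frac{u(y)}{|x_0-y|^{N+2s}}\,dy,\qquad x_0=(x_0',0)\in T_0. \quad(\ast\ast)$$

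Third, subtracting $(\ast)$ from $(\ast\ast)$ with a common $x_0'$ gives
$$0=\int_{\R^N_+}u(y)\left[\frac{1}{|(x_0',2\la)-y|^{N+2s}}-\frac{1}{|(x_0',0)-y|^{N+2s}}\right]dy.$$
On the slab $\{0<y_N<2\la\}$ the bracket is odd under $y_N\mapsto 2\la-y_N$ while $u$ is even, so this part integrates to zero by pairing symmetric points; on $\{y_N>2\la\}$ the bracket is strictly positive and $u\ge 0$, which forces $u\equiv 0$ there. Once the support of $u$ is confined to the strip $\{0\le y_N\le 2\la\}$, I evaluate $(\ast)$ at the interior zero $(x_0',y_0)$ with $y_0\to+\infty$ and, after integrating out the $y'$ variable, obtain $|f(0)|\le C\|u\|_\infty(y_0-2\la)^{-(2s+1)}\to 0$, so $f(0)=0$. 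Plugging this back into $(\ast)$ gives $\int u(y)|P-y|^{-N-2s}dy=0$ with $u\ge 0$, hence $u\equiv 0$ throughout $\R^N$, contradicting the working assumption.

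The main obstacle is the rigorous passage to the boundary in $(\ast\ast)$: we need the symmetry-inherited one-sided regularity at $T_0^+$, together with the vanishing from below, to ensure that $(-\De)^s u(x_0)$ is a classically well-defined absolutely convergent integral at every $x_0\in T_0$ and that the equation $(-\De)^s u=f(u)$ survives the limit $x\to x_0$ from inside $\R^N_+$. Everything after that identity is comparatively simple nonlocal bookkeeping.
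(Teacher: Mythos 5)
Your proposal is correct and follows essentially the same route as the paper: you exploit the symmetry-inherited regularity to evaluate the equation both at boundary points of $\{x_N=0\}$ and at the interior zero plane $T_{2\la}$, compare the two kernels (using the reflection symmetry in $\Sigma_{2\la}$) to force $u\equiv 0$ above the strip, and then deduce $f(0)=0$ and $u\equiv 0$, the only difference being cosmetic: the paper differentiates the resulting identity in $x_N$ at far-away points rather than sending the evaluation point to infinity. One minor imprecision: the zero extension is not in general $C^1$ across $T_0$ with quadratic vanishing, but only $O(x_N^{2s+\be})$ with $2s+\be<2$ (merely H\"older of order $2s+\be$ when $s<\tfrac12$); this weaker bound is, however, exactly what the absolute convergence and the limit passage in $(\ast\ast)$ require, as in the paper's dominated convergence argument.
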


\begin{proof}
We begin by showing that $u$ verifies the equation at $x=0$, that is,
\begin{equation}\label{eq-claim-1}
\int_{\R^N_+} \frac{u(y)}{|y|^{N+2s}} dy = -f(0).
\end{equation}
To see this, we first observe that with no loss of generality we may assume that $\be$ is 
restricted to satisfy $2s+\be<2$. Thus the 
symmetry of $u$ implies that the same regularity holds up to the boundary of $\R^N_+$, since 
necessarily $u$ and $\nabla u$ vanish there. Therefore 
$u\in C^{2s+\be}(\R^N)$.

Take an arbitrary sequence $\{x_n\} \subset \Sigma_{2\la}$ with $x_n\to 0$. Evaluating the 
equation at $x_n$, but using expression \eqref{operador2} for $(-\De)^s$, we see that
\begin{equation}\label{eq1}
f(u(x_n)) =\frac{1}{2} \int_{\R^N} \frac{2u(x_n)-u(x_n+y)-u(x_n-y)}{|y|^{N+2s}}\; dy.
\end{equation}
Now we have to distinguish between the cases $0<s<\frac{1}{2}$ and $\frac{1}{2}\le s<1$. 
In the former case, assuming $\be$ is such that $2s+\be<1$, we deduce from the regularity of $u$ that 
for sufficiently large $n$:
\begin{equation}\label{holder1}
|u(x_n)-u(x_n+y)| \le C |y|^{2s+\be} \quad \hbox{whenever } |y|\le 1,
\end{equation}
for some positive constant $C$. In the latter, if $\be$ is such that $2s+\be<2$, the regularity implies, 
also for large enough $n$
\begin{equation}\label{holder2}
|u(x_n)-u(x_n+y)- u(x_n-y)|  \le C |y|^{2s+\be} \quad \hbox{if } |y|\le 1.
\end{equation}
On the other hand, for $|y| \ge 1$,
\begin{equation}\label{holder3}
\left| \frac{u(x_n)-u(x_n+y)-u(x_n-y)}{|y|^{N+2s}}  \right| \le 3 \|u\|_{L^\infty (\R^N_+)} |y|^{-N+2s}.
\end{equation}
Inequalities \eqref{holder1}, \eqref{holder2} and \eqref{holder3} show that the integrand in \eqref{eq1} 
is bounded in absolute value by a function which is in $L^1(\R^N)$. Therefore, we may pass to the 
limit in \eqref{eq1} with the aid of dominated convergence to arrive at 
$$
f(0) =- \frac{1}{2} \int_{\R^N} \frac{u(y)+u(-y)}{|y|^{N+2s}}\; dy,
$$
which is equivalent to \eqref{eq-claim-1}.

By evaluating the equation at $x=2\la e_N$, using the fact that $u(2\la e_N)=0$ 
by symmetry, we deduce from \eqref{eq-claim-1} that 
\begin{equation}\label{eq2}
\int_{\R^N_+} \frac{u(y)}{|y|^{N+2s}} dy =\int_{\R^N_+} \frac{u(y)}{|2\la e_N-y|^{N+2s}} dy.
\end{equation}
Next, we split the second integral in two parts and use the symmetry of $u$ in $\Sigma_{2\la}$ 
to have:
\begin{align*}
\int_{\R^N_+} \frac{u(y)}{|2\la e_N-y|^{N+2s}} dy & = \left( \int_{\Sigma_{2\la}} +
 \int_{\R^N_+\setminus \Sigma_{2\la}} \right) \frac{u(y)}{|2\la e_N-y|^{N+2s}} dy\\[0.5pc]
& =\int_{\Sigma_{2\la}} \frac{u(z)}{|z|^{N+2s}} dz + 
 \int_{\R^N_+\setminus \Sigma_{2\la}} \frac{u(y)}{|2\la e_N-y|^{N+2s}} dy.
\end{align*}
Hence, from \eqref{eq2} we see that 
$$
\int_{\R^N_+ \setminus \Sigma_{2\la}} \frac{u(y)}{|y|^{N+2s}} dy = \int_{\R^N_+ \setminus \Sigma_{2\la}} 
\frac{u(y)}{|2\la e_N-y|^{N+2s}} dy.
$$
Taking into account that $u\ge 0$ and $|2\la e_N-y| \le |y|$ for $y\in \R^N_+ 
\setminus \Sigma_{2\la}$, we deduce $u\equiv 0$ in $\R^N_+ \setminus \Sigma_{2\la}$. 

Using this information and evaluating the equation at points $x\in \R^N_+\setminus \Sigma_{2\la}$, 
we obtain
\begin{equation}\label{eq3}
\int_{\Sigma_{2\la}} \frac{u(y)}{|x-y|^{N+2s}} dy =-f(0).
\end{equation}
Now observe that the integral above is a smooth function of $x$ if, say, $x_N\ge 2\la+1$, since 
the integrand is uniformly bounded and the integral is uniformly convergent at infinity when $x$ belongs to a 
compact set. Therefore, we are allowed to differentiate \eqref{eq3} with respect to $x_N$ to get:
$$
\int_{\Sigma_{2\la}} \frac{u(y)(x_N-y_N)}{|x-y|^{N+2s+2}} dy =0.
$$
However, $x_N-y_N\ge x_N-2\la\ge 1$ for $y\in \Sigma_{2\la}$ and the chosen values of $x$, so that the 
integrand is nonnegative and this gives $u\equiv 0$ in $\Sigma_{2\la}$, therefore in 
$\R^N$. It is clear that this can only happen when $f(0)=0$, and the proof is concluded.
\end{proof}

\bigskip

\section{Proof of the main results}
\setcounter{section}{5}
\setcounter{equation}{0}

In this final section we will prove our main contributions, Theorems \ref{teorema-1} and 
\ref{teorema-2}. The proof of Corollary \ref{coro-lineal} will not be given, since it is an 
immediate consequence of Theorem \ref{teorema-2}.

\bigskip

\begin{proof}[Proof of Theorem \ref{teorema-1}]
We have already said that the proof is an application of the method of moving planes, as used in 
\cite{QX} and \cite{FW}, but with some significant changes. In particular, we remark that we work 
with some truncations of the original functions, so we are led to the use of inequalities in the viscosity 
sense and Lemma \ref{lema-varianteFW}. We also need at some point the nonexistence result given 
by Theorem \ref{lema-noexistencia}. 

We follow the notation introduced in  Section 2.

\medskip

\noindent {\bf Step 1}. $w_\la \ge 0$ in $\Sigma_\la$ if $\la>0$ is small enough. 

\smallskip

To prove this, assume for a contradiction that $D_\la$ is not empty if $\la$ is small.  Since $u$ 
is bounded and $f$ is $C^1$, there exists a constant $L$ such that $f(u^\la)-f(u)\ge -L |u^\la-u|$. 
Therefore, using Lemma \ref{lema-signo} we have
$$
(-\De)^s v_\la \ge f(u^\la)-f(u) \ge -L |u^\la-u|= L v_\la \quad \hbox{in }
D_\la.
$$
By Theorem 2.4 in \cite{QX} we obtain $v_\la\ge 0$ in $D_\la$ when $\la$ is small enough, 
which is a contradiction. Therefore, $D_\la=\emptyset$ for small $\la$ and this shows the claim.

\medskip

\noindent {\bf Step 2.} Setting
$$
\la^*=\sup\{ \la>0: \ w_\mu\ge 0 \hbox{ in } \Sigma_\mu \hbox{ for every } \mu \in (0,\la)\},
$$
we have $\la^*=+\infty$. 

\smallskip

Assume again for a contradiction that 
$\la^*<+\infty$. Then there exists a sequence $\{\la_j\}$ of values such that $\la_j>\la^*$ for every 
$j$ and $\la_j\to \la^*$ as $j\to +\infty$, with $w_{\la_j}$ negative somewhere in $\Sigma_{\la_j}$. 
Consider the sets
$$
D_j= \{x\in \Sigma_{\la_j}: \ w_{\la_j} (x)<0\}
$$
and
$$
W_j=\{x\in D_j:\ f(u(x))> f(u^{\la_j}(x))\}.
$$
By the choice of $\la_j$, the sets $D_j$ are nonempty for every $j$. We claim that the 
same is true for $W_j$. Indeed, if we had $W_j=\emptyset$, then $f(u^{\la_j})\ge f(u)$ 
in $D_j$. Hence
$$
(-\De)^s v_{\la_j} \ge 0 \quad \hbox{in } D_j.
$$
By Lemma \ref{lema-pmax} we obtain $v_{\la_j}\ge 0$ in $D_j$, which is not possible. 
Therefore $W_j\neq \emptyset$. Thus it is possible to choose points $x_j\in W_j$ such that  
\begin{equation}\label{eq-eleccion}
v_{\la_j}(x_j) \le -\frac{1}{2} \| v_{\la_j} \|_{L^\infty(W_j)},
\end{equation}
and we can define the functions
$$
\widetilde u_j(x)= u(x'+x_j',x_N), \quad x\in \R^N_+.
$$
It is easily seen that  $\widetilde u_j$ is a solution of \eqref{problema}, verifying $\| \widetilde u_j \|_{L^\infty(\R^N_+)}=
\| u \|_{L^\infty(\R^N_+)}$. It is then standard, with the use of regularity theory, Ascoli-Arzel\'a's 
theorem and a diagonal argument, that for some subsequence
$$
\widetilde u_j \to \bar u
$$
locally uniformly in $\R^N$, where $\bar u$ is a nonnegative solution of 
\eqref{problema}. We may also assume that $x_{j,N}\to x_0\in [0,\la^*]$. Now three cases are 
possible:

\begin{itemize}

\item[(a)] $\bar u\not \equiv 0$, $x_0\in (0,\la^*)$;

\smallskip
\item[(b)] $\bar u\not \equiv 0$, $x_0=0$ or $x_0=\la^*$;

\smallskip
\item[(c)] $\bar u \equiv 0$.

\end{itemize}

\medskip
\noindent Before dealing with each of this cases, let us introduce some notation related to the 
functions $\widetilde u_j$. Let:
$$
\begin{array}{l}
\widetilde w_{\la_j}(x)= \widetilde u_j (x^{\la_j})- \widetilde u_j(x), \ \ x\in \Sigma_{\la_j},\\[0.5pc]
\widetilde{D}_j=\{x\in \Sigma_{\la_j}: \ \widetilde w_j(x)<0\},\\[0.5pc]
\widetilde{v}_{\la_j}(x) = \widetilde w_{\la_j}(x) \chi_{\widetilde D_j}(x), \ \ x\in \Sigma_{\la_j}, \\[0.5pc]
\widetilde{W}_j=\{x\in \widetilde{D}_j:\ f(\widetilde u_j(x))> f(\widetilde u_j(x^\la_j))\}
\end{array}
$$
(observe that $\widetilde{D}_j$ and $\widetilde{W}_j$ are nothing more than translations 
of $D_j$ and $W_j$, respectively). Denote also $z_j=(0,x_{j,N})$, $z_0=(0,x_0)$. 
By our choice of $x_j$ in \eqref{eq-eleccion} above, since it follows that $\widetilde{w}_{\la_j} (z_j)<0$ and $f(\widetilde u_j(z_j))> f(\widetilde u_j(z_j^{\la_j}))$, we deduce that $z_j \in \widetilde{W}_j$. Moreover we also get
\begin{equation}\label{eq-eleccion-2}
\widetilde v_{\la_j}(z_j) \le -\frac{1}{2} \| \widetilde v_{\la_j}\|_{L^\infty(\widetilde{W}_j)}.
\end{equation}
Now consider in turn each one of the cases (a), (b) and (c).

\medskip

In case (a), we see from $\widetilde u_j(x^{\la^*})\ge \widetilde u_j(x)$ in $\Sigma_{\la^*}$ that 
$\bar u(x^{\la^*}) \ge \bar u(x)$ in $\Sigma_{\la^*}$. Moreover, since $\widetilde u(z_j^{\la_j}) < 
\widetilde u(z_j)$,  we also have $\bar u(z_0^{\la^*}) = \bar u(z_0)$. 
Let us see that this implies $\bar u(x^{\la^*})\equiv \bar u(x)$ in $\R^N$. Indeed, arguing as in the 
proof of Lemma \ref{lema-signo} and denoting $\bar w_{\la^*}=\bar u ^{\la^*}-\bar u$, we obtain 
\begin{align*}
0 & =f(\bar u^{\la^*}(z_0))-f(\bar u(z_0))= (-\De)^s \bar w_{\la^*}(z_0)\\
& =-  \int_{\Sigma_{\la^*} \cup \R^N_-} \bar w_{\la^*}(y) \left( \frac{1}{|z_0-y|^{N+2s}} - \frac{1}{|z_0-y^\la|^{N+2s}} \right)dy,
\end{align*}
which implies $\bar w_{\la^*}\equiv 0$ in $\R^N$, since $\bar w_{\la^*}\ge 0$ in $\Sigma_{\la^*} \cup \R^N_-$ and 
$|z_0-y|\le |z_0-y^\la|$ for every $y\in \Sigma_{\la^*} \cup \R^N_-$. 

This means that $\bar u$ is symmetric with respect to $T_{\la^*}$, so that Theorem \ref{lema-noexistencia} 
implies $\bar u\equiv 0$,  a contradiction.

\medskip

As for case (b), assume $x_0=0$. We deduce from Lemma \ref{lema-varianteFW} and the choice of the points 
$z_j$:
\begin{align*}
\frac{1}{2} \| \widetilde v_{\la_j}\|_{L^\infty(\widetilde{W}_j)} \le - \widetilde v_{\la_j}(z_j) & 
\le L  \| \widetilde v_{\la_j}\|_{L^\infty(\widetilde{W}_j)} \int_{\widetilde{W}_j} G_\infty^+(z_j,y) dy\\[1pc]
& \le L  \| \widetilde v_{\la_j}\|_{L^\infty(\widetilde{W}_j)} \int_{\Sigma_{\la^*+1}} G_\infty^+(z_j,y) dy,
\end{align*}
where $L$ denotes a bound for the derivative of $f$ in $[0,\|u\|_{L^\infty(\R^N_+)}]$.
By Lemma \ref{lema-FW}, part (c), the last integral converges to zero since $z_j\to 0\in \p \R^N_+$. 
We deduce that $\widetilde v_{\la_j}=0$ in $\widetilde{W}_j$ when $j$ is large enough, a contradiction. 

When $x_0=\la^*$, a similar contradiction is reached. The only difference is that one now 
works with the Green's function in the half-space $\{x\in \R^N:\ x_N<\la_j\}$ (see Remark 
\ref{rem-semiespacio}). 

\bigskip

Finally, we consider case (c). This case can only arise when $f(0)=0$ and the proof is
different depending on the sign of $f'(0)$. We begin by assuming that $f'(0)>0$. 

In what follows, we denote by $\la_1(\Om)$ the principal eigenvalue of $(-\De)^s$ in $\Om$ under 
Dirichlet boundary conditions (cf. Proposition 9 in \cite{SV2}). If we take a ball $B_R$ with arbitrary 
center and radius $R$ then it can be seen by means of a simple scaling that 
$$ 
\la_1(B_R)=\frac{\la_1(B_1)}{R^{2s}}\to 0 \quad \hbox{as } R\to +\infty. 
$$
Thus it is possible to select a ball $B\subset \subset \R^N_+$ with the property that 
\begin{equation}\label{eq-autovalor}
\la_1(B)<\frac{1}{2} f'(0).
\end{equation}
Since $\widetilde u_j\to 0$ uniformly in $B$, we deduce
\begin{equation}\label{eq-ultima}
(-\De)^s \widetilde u_j = \frac{f(\widetilde u_j)}{\widetilde u_j}\; \widetilde u_j\ge \frac{1}{2}f'(0) \; 
\widetilde u_j \quad \hbox{in }B
\end{equation}
if $j$ is large enough. By Theorem 1.1 in \cite{QSX},  \eqref{eq-ultima} implies the opposite inequality in 
\eqref{eq-autovalor}, which is a contradiction.

Hence to conclude the proof only the case $f'(0)\le 0$ needs to be dealt with. Using the 
mean value theorem, we may write, for $y\in \widetilde W_j$:
$$
f(\widetilde u_j^{\la_j}(y))- f(\widetilde u_j(y)) = f'(\xi_j(y)) \widetilde v_j(y),
$$
where $\xi_j(y)$ is an intermediate value between $\widetilde u_j(y)$ and $\widetilde u_j^{\la_j}(y)$. 
Observe that $\xi_j\to 0$ uniformly on compact sets of $\R^N_+$ while 
$f'(\xi_j)\ge 0$. By Lemma \ref{lema-varianteFW} and \eqref{eq-eleccion-2}, we see that 
\begin{equation}\label{eq5}
\frac{1}{2} \| \widetilde v_{\la_j}\|_{L^\infty(\widetilde{W}_j)} \le  \| \widetilde v_{\la_j}\|_{L^\infty(\widetilde{W}_j)} 
\int_{\Sigma_{\la^*+1}} G_\infty^+ (z_j,y) f'(\xi_j(y)) dy.
\end{equation}
Now choose $R>0$ and split the integral in \eqref{eq5} in $B_R$ and $B_R^c$. If 
$L$ stands again for a bound for the derivative of $f$ in $[0,\|u\|_{L^\infty(\R^N_+)}]$, we have
\begin{align*}
\frac{1}{2} \| \widetilde v_{\la_j}\|_{L^\infty(\widetilde{W}_j)} & \le \| \widetilde v_{\la_j}\|_{L^\infty(\widetilde{W}_j)} 
\left(\int_{\Sigma_{\la^*+1} \cap B_R} G_\infty^+ (z_j,y)  f'(\xi_j(y)) dy \right.\\[1pc]
& \left. + L \int_{\Sigma_{\la^*+1} \cap B_R^c} G_\infty^+ (z_j,y) dy\right).
\end{align*}
Observe that the integral in $\Sigma_{\la^*+1}\cap B_R^c$ can be made as small as 
desired by taking $R$ large enough, thanks to the fact that $\{z_j\}$ is a bounded sequence 
and Lemma \ref{lema-FW}, part (a). Therefore the last term in the 
above inequality can be made less than $\frac{1}{4}$, say, if $R$ is chosen large. 
After we have fixed such a value of $R$, we observe that $\xi_j \to 0$ uniformly in 
$B_R$, so that $f'(\xi_j)\le o(1)$, since we are assuming $f'(0)\le 0$. 
Therefore, using Lemma \ref{lema-FW}, part (b), we arrive at 
$$
\frac{1}{4} \| \widetilde v_{\la_j}\|_{L^\infty(\widetilde{W}_j)} \le o(1) \| \widetilde v_{\la_j}\|_{L^\infty(\widetilde{W}_j)} 
$$
which, as above, is a contradiction.

\medskip

\noindent {\bf Step 3}. Proof of \eqref{eq-derivadapos}. 

\smallskip

As a consequence of steps 1 and 2, we have shown that $u^\la\ge u$ in $\Sigma_\la$ for every 
$\la>0$, that is, $u$ is nondecreasing as a function of the variable $x_N$. Since $u\in C^1(\R^N_+)$, 
this implies 
\begin{equation}\label{eq-der}
\frac{\p u}{\p x_N} \ge 0 \quad \hbox{in } \R^N_+.
\end{equation}
To conclude the proof of our theorem, we only have to show that the inequality is strict in 
\eqref{eq-der}. This is a consequence of the strong maximum principle for the derivative 
with respect to $x_N$. However, this function does not directly verify an equation in $\R^N_+$, 
since $u$ is not expected to be $C^1$ on $\p \R^N_+$. We overcome this difficulty by 
localizing the problem and working with incremental quotients.

Assume there exists $x_0\in \R^N_+$ such that 
\begin{equation}\label{eq-derivadacero}
\frac{\p u}{\p x_N} (x_0)=0.
\end{equation}
Choose $\de>0$ such that $B_{2\de} (x_0)\subset \subset \R^N_+$, and let $\phi \in C_0^\infty 
(B_{2\de} (x_0))$ be a cut-off function with the usual properties: $0\le \phi \le 1$ and $\phi=1$ in 
$B_{\de} (x_0)$. Choose a small $\tau>0$ and define for $x\in \R^N$:
$$
z_\tau(x)= \frac{u(x+\tau e_N)-u(x)}{\tau} \phi(x).
$$
Since $u$ is nondecreasing we have $z_\tau \ge 0$, and we obtain 
$$
(-\De)^s z_\tau \ge \frac{f(u(x+\tau e_N))-f(u(x))}{\tau} \quad \hbox{in } B_\de(x_0),
$$
with $z_\tau \ge 0$ in $\R^N$. Letting $\tau \to 0$, it is clear that 
$$
z_t\to z:= \frac{\p u}{\p x_N} \phi \quad \hbox{uniformly in }\R^N,
$$
and using Lemma 4.5 in \cite{CS} we see that 
$$
(-\De)^s z \ge f'(u)z \quad \hbox{in } B_\de(x_0),
$$
in the viscosity sense. The strong maximum principle and \eqref{eq-derivadacero} imply $z=0$ in 
$B_\de (x_0)$, so that $\frac{\p u}{\p x_N}=0$ in $B_\de(x_0)$. A standard connectedness argument 
then implies that 
$$
\frac{\p u}{\p x_N}=0 \quad \hbox{in } \R^N_+,
$$
which is impossible. This concludes the proof of \eqref{eq-derivadapos}. Observe by the way 
that $u>0$ in $\R^N_+$ is a direct consequence of \eqref{eq-derivadapos}.
\end{proof}

\bigskip

\begin{proof}[Proof of Theorem \ref{teorema-2}]
Assume $u$ is a nonnegative, bounded, nontrivial solution of \eqref{problema}. By Theorem 
\ref{teorema-1}, we have $u>0$ in $\R^N_+$. We first claim that for every $\de>0$, there exists $c(\de)>0$ 
such that 
$$
u(x) \ge c(\de) \quad \hbox{if } x_N\ge \de.
$$
Suppose for a contradiction that this is not true. Then there exists $\de>0$ and a sequence 
$\{x_n'\}\subset \R^{N-1}$ such that $u(x_n',\de)\to 0$. Define
$$
u_n(x) = u(x'+x_n',x_N), \quad x\in \R^N_+.
$$
Since $\{u_n\}$ is uniformly bounded, we obtain after passing to a subsequence that $u_n\to \bar u$ 
locally uniformly in $\R^N$, 
where $\bar u$ is a nonnegative, bounded solution of \eqref{problema} which verifies $\bar u(0,\de)=0$. 
Again by Theorem \ref{teorema-1} we see that $\bar u \equiv 0$. 

This is impossible if $f$ verifies (a) in the statement. When $f$ verifies (b), a similar argument as in 
case (c) in step 2 of the proof of Theorem \ref{teorema-1} also leads to a contradiction. This 
contradiction proves the claim.

Now fix any $\de>0$ and let 
$$
\theta =\inf_{c(\de)\le t \le M} f'(t)>0, 
$$
where $M= \| u\|_{L^\infty(\R^N_+)}$. If we choose any function $\phi \in C_0^\infty (\R^N_+\setminus 
\Sigma_\de)$ such that $0\le \phi\le 1$ and $\phi=1$ in $\R^N\setminus \Sigma_{2\de}$, we see 
as in step 3 in the proof of Theorem \ref{teorema-1} that the function
$$
z= \frac{\p u}{\p x_N} \phi
$$
verifies 
$$
(-\De)^s z \ge \theta z \quad \hbox{in }\R^N\setminus \Sigma_{2\de}.
$$
Arguing again with the principal eigenvalue $\la_1(B)$ in a sufficiently large ball $B$ contained in 
$\R^N\setminus \Sigma_{2\de}$ we reach a contradiction. This shows that no bounded, nonnegative, 
nontrivial solution to \eqref{problema} may exist under our hypotheses.
\end{proof}

\bigskip

\noindent {\bf Acknowledgements.} B. B. was partially supported by MEC Juan de la Cierva postdoctoral fellowship 
number  FJCI-2014-20504 (Spain). 
L. D. P. was partially supported by PICT2012 0153 from ANPCyT (Argentina).
B. B., J. G-M. and A. Q. were partially supported by Ministerio de Ciencia e
Innovaci\'on under grant MTM2014-52822-P (Spain). A. Q. was also partially supported by Fondecyt Grant
No. 1151180 Programa Basal, CMM. U. de Chile and Millennium Nucleus
Center for Analysis of PDE NC130017. B. B.  and J. G-M. would like to thank the Mathematics 
Department of Universidad T\'ecnica Federico Santa Mar\'ia where part of this work has been done 
for its kind hospitality.

\end{document}